\let\proof\@undefined
\let\endproof\@undefined
\newtheorem{prop}{Property}
\begin{document}
\title{Partitioning in the space of antimonotonic functions}
\author{Patrick De Causmaecker and Stefan De Wannemacker}
\institute{CODeS \& ITEC, KUL@K, Katholieke Universiteit Leuven, Kortrijk, Belgium
}
\maketitle
\begin{abstract}
This paper studies partitions in the space of antimonotonic boolean functions on sets of $n$ elements.
The antimonotonic functions are the antichains of the partially ordered set of subsets.
We analyse and characterise a natural partial ordering on this set.
We study the intervals according to this ordering.
We show how intervals of antimonotonic functions, and a fortiori the whole space of antimonotonic functions
can be partitioned as disjoint unions of certain classes of intervals.
These intervals are uniquely determined by antimonotonic functions on smaller sets.
This leads to recursive enumeration algorithms  and new recursion relations.
Using various decompositions, we derive new recursion formulae for the number of antimonotonic functions and hence for the number of monotonic functions (i.e. the Dedekind number).
\end{abstract}
% !TEX root = Paper.tex
\section{Introduction}
The $n$th Dedekind number counts the number of antichains of subsets of an $n$-element set or the number of elements in a free distributive lattice on $n$  generators. Equivalently, it counts the number of monotonic functions on the subsets of a finite set of $n$ elements \cite{DEDEKIND,SLOANE}.
In 1969, Kleitman \cite{KLEITMAN} obtained an upper bound on the logarithm of the $n$th Dedekind number which was later  improved by Kleitman and Markowsky \cite{KLEITMAN_MARKOWSKY} in 1975, namely
\[  (1+O((log n)/n)) \binom{n}{\lfloor n/2 \rfloor}.\]
In 1981, Korshunov \cite{KORSHUNOV} used a more complicated approach to give asymptotics for the $n$th Dedekind number itself. All these proofs were simplified by Kahn \cite{KAHN} in 2002 using an ``Entropy'' approach.
Finding a closed-form expression for the $n$th Dedekind number is a very hard problem, also known as Dedekind's problem and its exact values have been found only for $n \leq 8$ \cite{WIEDEMANN} :
    \begin{eqnarray*}
    2, 3, 6, 20, 168, 7581, &&7828354,\hfill\\
    && 2414682040998, 56130437228687557907788 .
    \end{eqnarray*}
This is sequence A000372 in Sloane's Online Encyclopedia of Integer Sequences \cite{SLOANE}.

Monotonic boolean functions on sets of numbers are boolean-valued functions that preserve inclusion $(A \subset B \wedge f(B) \Rightarrow f(A))$.
A monotonic boolean function $f$ is uniquely determined by the largest subset $S$ for which $f(S) = true$.
These largest subsets define another category of functions which we will call antimonotonic.
These correspond to the antichains in the lattice defined by set inclusion \cite{COMTET}.

%Patrick 11/11/10: remove refernce to heuristics
% The structure of anti-monotonic functions constitute a context to study the behaviour of heuristics.
% This is the main motivation for the current paper.
% More specifically, 
The main contribution of this paper is an algebra of intervals for the set of antimonotonic functions.
The algebra is based on the natural partial ordering on this set.
Given two comparable antimonotonic functions $\alpha \le \beta$ according to this ordering, the interval $[\alpha,\beta]$ is the set of antimonotonic functions $\gamma$ satisfying $\alpha \le \gamma \le \beta$.
We demonstrate how the space of antimonotonic functions on a finite set, and in fact any of its intervals, can be decomposed as a discrete union of intervals with border elements from lower dimensional subspaces.
This result is based on two decomposition theorems, the second being a consequence of the first.
The first theorem starts from a general antimonootonic function and uses the sets inside this function to generate
the lower dimensional subspaces.
The second theorem uses a partition of the basic set of n elements to genereate the antimonotonic function.
The latter theorem allows decompostion with somewhat different properties.
As applications of the decomposition, we derive a number of recursion formulas for Dedekind numbers.
These formulae are new to the best of our knowledge.
Another application is a class of algorithms for enumeration based on recursive sectioning of the intervals
We prove finiteness of these algorithms and argue that some of these algorithms  are of output-polynomial time complexity.
 %In this paper we define a partial ordering on the set of antimonotonic functions.
%Immediate successorschip according to this ordering is characterised.
%Given two comparable antimonotonic functions $\alpha \le \beta$, the interval $[\alpha,\beta]$ is the set of antimonotonic functions $\gamma$ satisfying $\alpha \le \gamma \le \beta$.
%We demonstrate how the space of antimonotonic functions on a finite set, and in fact any of its intervals, can be decomposed as a discrete union of intervals with border elements from lower dimensional subspaces.
%In the first part of the paper, we derive the decomposition for discrete subspaces leading to a first decomposition theorem. In the second part we generalise the decomposition for non discrete subspaces leading to a more general theorem. Both theorems allow to derive several recursion properties and recursive algorithms for enumerating all elements of an interval or of the whole space. Judicial choice of the lower dimensional spaces allow symmetry properties leading to new recursion formulae for the Dedekind numbers.
% defines a natural concept of proximity which we intend to use in the study of heuristics.

%In section \ref{monotonic} we give a precise definition of the concept of a monotonic function and give a simple example.
In section \ref{ordering} we give a definition of the concept of monotonic functions, define partial order on monotonic function and characterise immediate succession.
In section \ref{antimonotonic} we define antimonotonic functions, give the well known isomorphism with the monotonic functions and derive the characterisation of the immediate succession for antimonotonic functions.
In section \ref{sec:operators} we define the well known join and meet operators as well as a convenient projection operator. A fourth operator is introduced which plays an important role in the decomposition theorems that follow.
Apart from the introduction of the latter operator, the first three sections mainly serve to set the notations.
In section \ref{sec:intervals}, we introduce our intervals of antimonotonic functions.
In section \ref{sec:uniformspandecomposition}, the decomposition of an important c lass of intervals - intervals of uniform span - is discussed.
In section \ref{sec:generaldecomposition}, the decomposition of general intervals is studied and an interval based enumeration algorithm is given.
We summarise our results in section \ref{sec:conclusions} and anticipate on further work.
% !TEX root = Paper.tex
\section{Partial ordering on monotonic functions}
\label{ordering}
Given a positive integer $n$, we denote by $P_n$ the set of the positive integers less than or equal to $n$.
We are interested in boolean functions defined on all subsets of $P_n$, which we denote as $2^{P_n}$.
Any such boolean function $f$ is uniquely defined by the set $f^{-1}(true)$.
We will not distinguish and consider $f$ as a function or as a set whichever is the clearest.
In other words, for a boolean function $f$ and a set $X \subseteq P_n$, we have
\begin{equation}
	X \in f \text{ is equivalent to } f(X).
	\label{eq:functionsassets}
\end{equation}
The following property defines monotonicity.
\begin{definition}{\bf Monotonic boolean functions\\}
A monotonic boolean function on $2^{P_n}$ is a boolean-valued function $f$ such that
\begin{equation}
\label{definition:monotonicfunction}
\forall S \subseteq S' \subseteq P_n:f(S') \Rightarrow f(S).
\end{equation}
We denote by $MT(n)$ the set of monotonic boolean functions on $P_n$.
\end{definition}
\noindent The set $MT(n)$ may be partially ordered by the following natural order relation.
\begin{definition}{\bf Partial ordering of $MT(n)$\\}
Given two monotonic boolean functions $f_1, f_2 \in MT(n)$, we define the partial order relation $\le$ as
\begin{equation}
	f_1 \le f_2 \Leftrightarrow \forall S \subseteq P_n:f_1(S) \Rightarrow f_2(S)
	\label{eq:mototonicordering}
\end{equation}
We define strict inequality in the usual way by
\begin{equation}
	f_1 < f_2 \Leftrightarrow f_1 \not= f_2 \wedge f_1 \le f_2
	\label{eq:strictmonotonicordering}
\end{equation}
We will also use $\ge (>)$:
\begin{equation}
	f \ge (>) g \Leftrightarrow g \le (<) f
	\label{eq:partialgreaterthan}
\end{equation}
\end{definition}
\noindent The immediate successors of a function in $MT(n)$ are defined by
\begin{definition}{\bf Immediate succession in $MT(n)$\\}
Given functions $f, g \in MT(n)$, we say that $g$ is an immediate successor of $f$ iff
\begin{equation}
	f < g \wedge \nexists h \in MT(n):f < h < g
	\label{eq:immediatesuccession}
\end{equation}
We denote the set of immediate successors of $f \in MT(n)$ by $Next(f)$ and if $g \in Next(f)$ we say $f <_{im} g$ or equivalently $g >_{im} f$.
\end{definition}
\noindent The following lemma allows to construct immediate successors in $MT(n)$.
\begin{lemma}
\label{lemma:immediatesuccessors}
Given $f,g \in MT(n)$, we have
\begin{equation}
	f <_{im} g \Leftrightarrow \exists S \in 2^{P_n}:g \backslash f = \{S\} \wedge \forall X \subsetneq S:X \in f
	\label{eq:immediatesuccessionlemma}
\end{equation}
\end{lemma}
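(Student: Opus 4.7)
The plan is to prove both directions by working with the set-theoretic view of $f$ and $g$ (so $f \le g$ means $f \subseteq g$), exploiting that monotonic functions correspond to downward-closed collections of subsets of $P_n$.

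For the forward direction, assuming $f <_{im} g$, I would first choose $S$ to be a \emph{minimal} element of the nonempty set $g \setminus f$ under set inclusion. The key observation is that since $g$ is monotonic, any $X \subsetneq S$ belongs to $g$; but $X \in g \setminus f$ would contradict minimality of $S$, so $X$ must lie in $f$. This establishes the proper-subset condition. I would then build the candidate $h := f \cup \{S\}$ and verify that $h$ is monotonic: downward closure only needs to be checked at the newly added $S$, and it holds because every proper subset of $S$ lies in $f \subseteq h$. Since $h$ satisfies $f < h \le g$, the immediacy hypothesis $f <_{im} g$ forces $h = g$, whence $g \setminus f = \{S\}$ exactly.

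For the converse, I would assume an $S$ exists as described and first argue that $g = f \cup \{S\}$ really is a monotonic function above $f$: downward closure follows from the fact that the proper subsets of the only new element $S$ all live in $f$. Then, for the absence of anything strictly in between, suppose by contradiction that some $h \in MT(n)$ satisfies $f < h < g$. Then $h \setminus f$ is a nonempty subset of $g \setminus f = \{S\}$, so $h \setminus f = \{S\}$ and $h = g$, contradicting $h < g$. Hence $f <_{im} g$.

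The only step that requires genuine care is the forward direction's use of minimality: one has to recognise that picking $S$ arbitrarily in $g \setminus f$ is not enough, since then $f \cup \{S\}$ need not be monotonic. Taking $S$ minimal in $g \setminus f$ is precisely the device that makes $f \cup \{S\}$ downward-closed and simultaneously yields the proper-subset condition. Everything else reduces to routine verification of downward closure and the bookkeeping of the set difference $g \setminus f$.
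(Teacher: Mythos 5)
Your proof is correct, and its converse direction coincides with the paper's: both check that $f \cup \{S\}$ is monotonic and that any $h$ with $f < h < f \cup \{S\}$ must collapse onto one of the endpoints. Where you genuinely differ is the forward direction. The paper argues by contraposition: it asserts without detail that $|g \backslash f| > 1$ already excludes immediate succession, and, in the case $g \backslash f = \{S\}$ with some $S' \subsetneq S$ missing from $f$, it exhibits the intermediate function $f \cup 2^{S'}$ (the downward closure of $S'$), which satisfies $f < f \cup 2^{S'} < g$. You instead argue directly: pick $S$ \emph{minimal} in $g \backslash f$, use downward closure of $g$ together with minimality to force every $X \subsetneq S$ into $f$, and then squeeze the monotonic function $f \cup \{S\}$ between $f$ and $g$, so that immediacy yields $g = f \cup \{S\}$. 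Your route handles both failure modes in one stroke and in particular supplies the justification for the step the paper dismisses as ``clearly'' (when $|g \backslash f| > 1$, an intermediate element is produced by exactly your minimal-element construction), at the price of having to notice that an arbitrary $S \in g \backslash f$ would not do. One caveat, which you share with the paper rather than introduce: the right-hand side of the stated equivalence does not literally include $f < g$ (contrast Lemma \ref{lemma:antimonotoneimmediatesuccessors}, which adds $\alpha < \beta$), so both your converse and the paper's implicitly read $g = f \cup \{S\}$; this is clearly the intended interpretation and does not affect the substance of your argument.
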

\begin{proof}
Clearly $|g \backslash f| > 1 \Rightarrow g \notin Next(f)$ and if $g \backslash f = \{S\}$ with $S' \notin f$ for some $S' \subsetneq S \subset P_n$, then $f < f \cup 2^{S'} < g$ and $g \notin Next(f)$. This proves the ``$\Rightarrow$'' part of the lemma.
Given $f \in MT(n)$, $S \notin f$ such that $\forall X \subsetneq S:X \in f$.
$f \cup \{S\}$ is monotonic and thus in $MT(n)$. We prove that $(f \cup \{S\}) >_{im} f$.
Suppose there is a $h \in MT(n)$ with $f < h < (f \cup \{S\})$.
$h < (f \cup \{S\})$ implies that $\forall X \in h: X \in f \vee X = S$.
Since $f < h$, $X \in f$ implies $X \in h$ so that $(h \not= f \cup \{S\} \Rightarrow S \notin h)$.
But this would imply that $f = h$, a contradiction.
\end{proof}
\noindent Since the smallest monotonic function is $\emptyset$ and the largest is $2^{P_n}$, we have
\begin{corollary}
The length of the longest chains of immediate successors is $2^n+1$.
\end{corollary}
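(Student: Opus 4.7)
The plan is to combine Lemma \ref{lemma:immediatesuccessors} with the obvious extreme elements of $MT(n)$.

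First I would identify the minimum and maximum of the poset: $\emptyset \in MT(n)$ (the function that is nowhere true) and $2^{P_n} \in MT(n)$ (the function that is everywhere true) satisfy $\emptyset \le f \le 2^{P_n}$ for every $f \in MT(n)$ by the definition of $\le$. So any longest chain of immediate successors must start at $\emptyset$ and end at $2^{P_n}$.

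Second, I would extract from Lemma \ref{lemma:immediatesuccessors} the cardinality statement: if $f <_{im} g$, then $|g\setminus f| = 1$, so viewing monotonic functions as subsets of $2^{P_n}$, each immediate succession increases cardinality by exactly one. Consequently, any chain $f_0 <_{im} f_1 <_{im} \cdots <_{im} f_k$ satisfies $|f_k| = |f_0| + k$. Applied to $f_0 = \emptyset$ and $f_k = 2^{P_n}$, we get $k = 2^n$, i.e., the chain has $2^n + 1$ elements.

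Third, I would exhibit such a chain to show the bound is attained. Starting from $f_0 = \emptyset$, inductively define $f_{i+1} = f_i \cup \{S_i\}$ where $S_i$ is any $\subseteq$-minimal element of $2^{P_n} \setminus f_i$. By minimality, every $X \subsetneq S_i$ lies in $f_i$, so Lemma \ref{lemma:immediatesuccessors} gives $f_i <_{im} f_{i+1}$. After $2^n$ steps we reach $2^{P_n}$, producing a chain of $2^n + 1$ monotonic functions. The only mild obstacle is the bookkeeping of "length" versus "number of elements," but once chain length is interpreted as the number of functions appearing in the chain, both the upper bound (from cardinality increments) and the matching construction follow immediately.
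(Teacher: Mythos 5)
Your proof is correct and follows essentially the same route as the paper, which states the corollary as an immediate consequence of Lemma \ref{lemma:immediatesuccessors} together with the observation that $\emptyset$ and $2^{P_n}$ are the extreme elements of $MT(n)$. You merely make explicit the two ingredients the paper leaves implicit: each immediate succession adds exactly one set (giving the upper bound $2^n+1$ on the number of functions in a chain), and repeatedly adjoining a $\subseteq$-minimal missing set realises that bound.
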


% !TEX root = Paper.tex
\section{Antimonotonic functions}
\label{antimonotonic}
\begin{definition}{\bf Antimonotonic boolean functions\\}
An antimonotonic boolean function on $2^{P_n}$ is a boolean-valued function $\alpha$ such that
\begin{equation}
\label{definition:antimonotonicfunction}
\forall S \subsetneq S' \subseteq P_n:\alpha(S') \Rightarrow \neg \alpha(S).
\end{equation}
We denote by $AMT(n)$ the set of antimonotonic boolean functions on $2^{P_n}$.
\end{definition}

\begin{theorem}
$\forall n > 0:|AMT(n)| = |MT(n)|$
\end{theorem}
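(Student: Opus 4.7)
The plan is to exhibit an explicit bijection $\Phi : MT(n) \to AMT(n)$. Given a monotonic function $f \in MT(n)$ (viewed as a downward-closed family of subsets of $P_n$), define $\Phi(f)$ to be the family of inclusion-maximal members of $f$, that is, $\Phi(f) = \{ S \in f : \forall S' \supsetneq S,\ S' \notin f \}$. Conversely, for $\alpha \in AMT(n)$ define $\Psi(\alpha) = \{ S \subseteq P_n : \exists T \in \alpha,\ S \subseteq T \}$, the downward closure of $\alpha$. The theorem then follows once I check that $\Phi$ and $\Psi$ are well-defined and mutually inverse.

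I would carry out the verification in four short steps. First, $\Phi(f) \in AMT(n)$: two distinct members of $\Phi(f)$ cannot be comparable since the smaller one would fail to be maximal in $f$, so $\Phi(f)$ is an antichain and hence satisfies the defining condition of $AMT(n)$. Second, $\Psi(\alpha) \in MT(n)$: if $S \subseteq S'$ and $S' \in \Psi(\alpha)$, pick $T \in \alpha$ with $S' \subseteq T$; then $S \subseteq T$, so $S \in \Psi(\alpha)$. Third, $\Psi(\Phi(f)) = f$: the inclusion $\supseteq$ uses that $2^{P_n}$ is finite, so every $S \in f$ is contained in some inclusion-maximal $T \in f$, and the inclusion $\subseteq$ is the downward-closedness of $f$. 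Fourth, $\Phi(\Psi(\alpha)) = \alpha$: every $T \in \alpha$ is maximal in $\Psi(\alpha)$ because any proper superset of $T$ in $\Psi(\alpha)$ would have to be contained in some other $T' \in \alpha$, giving $T \subsetneq T'$ and violating the antichain property; conversely, a maximal element of $\Psi(\alpha)$ must itself belong to $\alpha$ by construction.

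The argument is essentially a folklore correspondence between down-sets and antichains, so no step is truly hard; the main place to be careful is the third step, where finiteness of $P_n$ is invoked to guarantee that maximal elements exist above every element, and where the boundary cases $f = \emptyset$ and $f = \{\emptyset\}$ (which map respectively to $\alpha = \emptyset$ and $\alpha = \{\emptyset\}$) should be mentioned explicitly to confirm that the bijection behaves correctly on the extremes of the lattice.
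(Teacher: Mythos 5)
Your proposal is correct and is essentially the paper's own argument: the paper defines $mt_n(\alpha)$ as the downward closure of $\alpha$ and $mt_n^{-1}(f)$ as the set of inclusion-maximal elements of $f$, which are exactly your $\Psi$ and $\Phi$ (just with the named map going in the opposite direction). Your version merely spells out the verification that the two maps are mutually inverse in more detail than the paper does.
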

\begin{proof}
Consider the boolean function $mt_n:AMT(n) \rightarrow MT(n)$ defined by
\[
\forall \alpha \in AMT(n):mt_n(\alpha) = \{S \subseteq P_n|\exists S' \in \alpha:S \subseteq S'\}
\]
The function $mt_n(\alpha)$ is monotonic by definition.
Given $f \in MT(n)$, $mt_n^{-1}(f)$ is given by the set of maximal elements of $f$.
\[
mt_n^{-1}(f) = \{S \in f|\neg \exists S' \in f:S \subset S'\}
\]
which is in $AMT(n)$ and is the inverse of $mt_n$ as can be verified.
\end{proof}
\noindent We will denote by $mt_n$ the mapping from $AMT(n)$ to $MT(n)$ and by $amt_n$ its inverse.
\\
\\
The ordering on $MT(n)$ induces an ordering on $AMT(n)$:
\begin{definition}{\bf Partial ordering of $AMT(n)$\\}
Given two antimonotonic boolean functions $\alpha, \beta \in AMT(n)$, $\alpha \le \beta$ iff $mt_n(\alpha) \le mt_n(\beta)$
and $\alpha < \beta$ iff $mt_n(\alpha) < mt_n(\beta)$.
\end{definition}
\begin{lemma}{\bf Partial ordering of $AMT(n)$\\}
The induced ordering on $AMT(n)$ is given by
\begin{equation}
	\forall \alpha,\beta \in AMT(n):\alpha \le \beta \\
	\Leftrightarrow \forall S \in \alpha:\exists S' \supseteq S:S' \in \beta
	\label{eq:antimototonicordering}
\end{equation}
The strict ordering is given by
\begin{equation}
	\forall \alpha,\beta \in AMT(n):\alpha < \beta \\
	\Leftrightarrow \alpha \le \beta \wedge \exists S \in \beta:S \notin \alpha
	\label{eq:strictantimototonicordering}
\end{equation}

\end{lemma}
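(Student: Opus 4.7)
The plan is to unfold the definition of the induced ordering through $mt_n$, treating monotonic functions as downward-closed sets of subsets, and then to argue the two equivalences by tracking witnesses.

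For the non-strict equivalence, I will first recall that $mt_n(\alpha) = \{T \subseteq P_n \mid \exists S \in \alpha : T \subseteq S\}$, so $\alpha \le \beta$ means simply $mt_n(\alpha) \subseteq mt_n(\beta)$ in the set sense. The forward direction is then a one-line chase: any $S \in \alpha$ lies in $mt_n(\alpha)$ by taking $T = S$, hence in $mt_n(\beta)$, which by definition yields some $S' \in \beta$ with $S \subseteq S'$. The reverse direction takes an arbitrary $T \in mt_n(\alpha)$, picks a witness $S \in \alpha$ with $T \subseteq S$, applies the hypothesis to obtain $S' \in \beta$ with $S \subseteq S'$, and concludes $T \subseteq S'$ so that $T \in mt_n(\beta)$.

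For the strict version, I will unfold $\alpha < \beta$ as $\alpha \le \beta$ together with $mt_n(\alpha) \neq mt_n(\beta)$, which (since $mt_n$ is a bijection) is just $\alpha \neq \beta$. The easy direction is immediate, because a set $S \in \beta$ with $S \notin \alpha$ forces $\alpha \neq \beta$. The direction that requires care is producing such an $S$ starting only from $\alpha \le \beta$ and $\alpha \neq \beta$: a priori the disagreement could be witnessed in $\alpha \setminus \beta$ rather than $\beta \setminus \alpha$.

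The hard part, and the one place where antimonotonicity is genuinely used, is converting a would-be witness $S \in \alpha \setminus \beta$ into a witness on the correct side. I plan to apply the non-strict characterisation to produce $S' \supseteq S$ with $S' \in \beta$, observe that $S \notin \beta$ forces $S' \supsetneq S$, and then invoke antimonotonicity of $\alpha$: from $S \in \alpha$ and $S \subsetneq S'$ one gets $S' \notin \alpha$. Hence $S' \in \beta \setminus \alpha$ is the desired witness, completing the proof.
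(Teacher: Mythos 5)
Your proof is correct. The paper states this lemma without providing a proof, and your argument is exactly the intended definitional unfolding: the non-strict part is a straightforward chase through $mt_n(\alpha) = \{T \mid \exists S \in \alpha: T \subseteq S\}$, and you correctly identify and handle the only delicate point in the strict part, namely converting a witness $S \in \alpha \setminus \beta$ into one in $\beta \setminus \alpha$ via the antichain property of $\alpha$ (a proper superset $S' \supsetneq S$ of a member of $\alpha$ cannot itself lie in $\alpha$), together with the injectivity of $mt_n$ to identify $mt_n(\alpha) \neq mt_n(\beta)$ with $\alpha \neq \beta$.
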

\noindent The definition of immediate successors in $AMT(n)$ follows the corresponding definition for $MT(n)$.
As in $MT(n)$, $next(\alpha)$ is the set of immediate successors of $\alpha \in AMT(n)$.
An immediate successor is constructed according to the following lemma.
\begin{lemma}
\label{lemma:antimonotoneimmediatesuccessors}
Given $\alpha,\beta \in AMT(n)$, we have
\begin{equation}
	\beta >_{im} \alpha \Leftrightarrow \alpha < \beta \ \wedge\ \exists S \in 2^{P_n}:\beta \backslash \alpha = \{S\} \wedge \forall X \subsetneq S:\exists X' \supseteq X: X' \in \alpha
	\label{eq:antimonotoneimmediatesuccessionlemma}
\end{equation}
\end{lemma}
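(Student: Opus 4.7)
The plan is to lift Lemma~\ref{lemma:immediatesuccessors} from $MT(n)$ to $AMT(n)$ through the isomorphism $mt_n$. By definition of the induced ordering, $\beta >_{im} \alpha$ in $AMT(n)$ is equivalent to $mt_n(\beta) >_{im} mt_n(\alpha)$ in $MT(n)$, which by Lemma~\ref{lemma:immediatesuccessors} is equivalent to the existence of $S \in 2^{P_n}$ with $mt_n(\beta) \setminus mt_n(\alpha) = \{S\}$ and $\forall X \subsetneq S: X \in mt_n(\alpha)$. So the whole task reduces to showing that this condition on the monotonic closures is equivalent to the condition stated in the lemma on the antichains themselves.

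For the ``$\Leftarrow$'' direction I would assume $\beta \setminus \alpha = \{S\}$ and $\forall X \subsetneq S: \exists X' \supseteq X, X' \in \alpha$, and verify $mt_n(\beta) \setminus mt_n(\alpha) = \{S\}$. Containment of $S$ in $mt_n(\beta)$ is immediate; the hypothesis directly gives $X \in mt_n(\alpha)$ for every $X \subsetneq S$. To see $S \notin mt_n(\alpha)$, suppose $S \subseteq S'$ for some $S' \in \alpha$; by $\alpha \le \beta$ there is $T \supseteq S'$ in $\beta$, so $T \supseteq S$ with both $T, S \in \beta$, forcing $T = S$ by antimonotonicity of $\beta$, hence $S = S' \in \alpha$, contradicting $S \in \beta \setminus \alpha$. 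Finally, for any $T \in mt_n(\beta)$, $T \ne S$, pick $T' \in \beta$ with $T \subseteq T'$: if $T' \ne S$ then $T' \in \alpha$ (since $\beta \subseteq \alpha \cup \{S\}$), while if $T' = S$ then $T \subsetneq S$ and the hypothesis supplies an $\alpha$-superset of $T$; in either case $T \in mt_n(\alpha)$.

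For the ``$\Rightarrow$'' direction I would start from $mt_n(\beta) \setminus mt_n(\alpha) = \{S\}$ plus the chain condition. The condition $\forall X \subsetneq S: \exists X' \supseteq X, X' \in \alpha$ is the unfolding of $X \in mt_n(\alpha)$. It remains to show $\beta \setminus \alpha = \{S\}$. Choose $S' \in \beta$ with $S \subseteq S'$; if $S \subsetneq S'$ then $S' \in mt_n(\beta)$, and $S' \in mt_n(\alpha)$ would force $S \in mt_n(\alpha)$, so $S' \in mt_n(\beta)\setminus mt_n(\alpha) = \{S\}$, contradicting $S \subsetneq S'$. Thus $S = S' \in \beta$, and $S \notin \alpha$ since $S \notin mt_n(\alpha)$. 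For uniqueness, any other $T \in \beta \setminus \alpha$ satisfies $T \in mt_n(\alpha)$, yielding $T' \supseteq T$ in $\alpha$; applying $\alpha \le \beta$ gives $T'' \supseteq T'$ in $\beta$, and antimonotonicity of $\beta$ on the pair $T, T'' \in \beta$ with $T'' \supseteq T' \supseteq T$ collapses everything to $T = T' \in \alpha$, a contradiction.

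The main obstacle, and the reason the lemma is not a one-line corollary of the monotonic case, is exactly this uniqueness argument for $\beta \setminus \alpha$: it requires combining antimonotonicity of $\beta$ with the characterisation of the order (\ref{eq:antimototonicordering}) to rule out ``extra'' elements in $\beta \setminus \alpha$ that might appear in $mt_n(\alpha)$ via a proper superset. Everything else is straightforward bookkeeping on the definition of $mt_n$.
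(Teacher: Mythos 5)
Your proposal is correct and follows essentially the same route as the paper: transfer the statement to $MT(n)$ through the order isomorphism $mt_n$ and invoke the characterisation of immediate successors there (Lemma~\ref{lemma:immediatesuccessors}). In fact you work out both directions of the equivalence in detail, whereas the paper proves one direction and dismisses the converse as ``similar'', so your argument is a more complete version of the same idea.
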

\begin{proof}
Let $\beta$ satisfy the condition of the lemma with $S$ the only set contained in $\beta$ and not in $\alpha$.
We prove that $mt_n(\beta) >_{im} mt_n(\alpha)$.
It is easily seen that $\alpha$ contains all elements of $\beta$ except $S$ and real subsets of $S$ with cardinality $|S|-1$.
So it follows that $mt_n(\beta) \backslash mt_n(\alpha) = \{S\}$.
Let $X \in mt_n(\beta)$ such that $X \not\subseteq S$.
$X$ clearly is in $mt_n(\alpha)$.
Let $X \not= S$ be a subset of S.
Again it follows that $X \in mt_n(\alpha)$.
So $mt_n(\beta)$ satisfies the conditions of lemma \ref{lemma:immediatesuccessors} and is an immediate successor
of $mt_n(\alpha)$.
It follows that $\beta >_{im} \alpha$.
This proves the leftward side of the equivalence. The proof of the other side is similar.
\end{proof}
\begin{definition}{\bf Extension of the notation\\}
Let $n_1,n_2 \in \mathbb{N},\ n_1\leq n_2$. We will use the notation $AMT(n_1,n_2)$ for the space of antimonotonic functions on subsets of $\{n_1,...,n_2\}$.
Clearly, we have $AMT(n) = AMT(1,n)$.
More generally, for any finite set $M$ of natural numbers, $AMT(M)$ will denote the the space of antimonotonic functions on subsets of $M$.
\end{definition}
%

% !TEX root = Paper.tex
\section{Projection and other operators}
\label{sec:operators}
\subsection{Projection}
Given two finite sets $N' \subseteq N$ the projection from the space $AMT(N)$ to $AMT(N')$ is defined by
\begin{equation}
	\pi_{N'}:AMT(N) \rightarrow AMT(N'):\alpha \rightarrow sup(\{A \cap N'|A \in \alpha\})
	\label{def:projection}
\end{equation}
\begin{prop}[Order conservation by projection]
For sets of integers $N' \subseteq N$, $\alpha, \beta \in AMT(N)$ we have
\begin{equation}
	\alpha \le \beta \Rightarrow \pi_{N'}(\alpha) \le \pi_{N'}(\beta)
	\label{prop:orderconservationprojection}
\end{equation}
\end{prop}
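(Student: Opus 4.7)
The plan is to unwind the definition of $\pi_{N'}$ and apply the characterisation of the order on antimonotonic functions given in Lemma~\ref{eq:antimototonicordering} (equation \eqref{eq:antimototonicordering}). Recall that $\pi_{N'}(\alpha)$ is the set of maximal elements of $\{A\cap N'\mid A\in\alpha\}$; the "sup" here just strips non‑maximal elements so that the result is in $AMT(N')$.

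First I would fix an arbitrary $T\in\pi_{N'}(\alpha)$ and aim to produce $T'\in\pi_{N'}(\beta)$ with $T'\supseteq T$, since this is exactly the condition \eqref{eq:antimototonicordering} required to conclude $\pi_{N'}(\alpha)\le\pi_{N'}(\beta)$. By definition of the projection, there exists $A\in\alpha$ such that $T=A\cap N'$. Applying the hypothesis $\alpha\le\beta$ and \eqref{eq:antimototonicordering} to $A$, I obtain some $B\in\beta$ with $B\supseteq A$. Intersecting with $N'$ gives $B\cap N'\supseteq A\cap N'=T$.

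The element $B\cap N'$ lies in $\{C\cap N'\mid C\in\beta\}$, but it need not itself be maximal, so it need not belong to $\pi_{N'}(\beta)$. Since $N'$ is finite, however, this set is finite, and any element is dominated by some maximal element. Choose a maximal $T'\in\pi_{N'}(\beta)$ with $T'\supseteq B\cap N'$. Then $T'\supseteq T$, as desired.

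The only mild obstacle is the last step, the passage from "some element of the projected family" to "a maximal such element"; this is trivially handled by finiteness of $2^{N'}$, and it is the reason the sup is well‑defined in the first place. Everything else is a direct chain of inclusions, and the symmetry of the argument makes it independent of whether $\alpha<\beta$ or $\alpha=\beta$, so the strict version could be noted as a separate remark if needed, but it is not part of the statement.
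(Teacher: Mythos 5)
Your argument is correct: the paper states this property without giving a proof, and your verification --- taking a maximal $T = A\cap N'$ in $\pi_{N'}(\alpha)$, using the characterisation \eqref{eq:antimototonicordering} to find $B\supseteq A$ in $\beta$, and then passing from $B\cap N'$ to a maximal element of the projected family of $\beta$ --- is exactly the direct unwinding of the definitions that the authors evidently had in mind. The attention to the fact that $B\cap N'$ need not itself be maximal, handled by finiteness, is the one point where care is needed, and you handle it correctly.
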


\noindent Three further operators are useful to explore the space of antimonotonic functions further. 
The first two immediately follow from the meet and join operators in the lattice of anti-chains of which the anti-monotonic functions are a representation.
The definition of the third operator given here is tied to the specifics of anti-monotonic functions, but as we demonstrate in appendix \ref{app:generallattices}, this operator and its applications in the intervals we subsequently derive can be readily generalised to complete distributive lattices. 
\subsection{Meet}
We define the idempotent meet operator $(\wedge)$ as follows
\begin{equation}
	\forall \alpha,\beta \in AMT(N):\alpha \wedge \beta = sup(\{A \cap B| A \in \alpha, B \in \beta\}))
	\label{def:externaldot}
\end{equation}
Where de operation $sup$ is defined as follows
\begin{equation}
\forall S \subset 2^N:sup(S) = \{A \in S|\nexists A' \in S: A \subsetneq A'\}
\label{def:sup}
\end{equation}
Obviously we have that $\forall \alpha,\beta \in AMT(N):\alpha \wedge \beta \in AMT(N)$. 
The following property makes the connection with the more general operator for anti-chains.
\begin{prop}[Largest common lower bound]
For antimonotonic functions $\alpha,\beta,\kappa$ we have
\begin{equation}
	\kappa \le \alpha\ and\ \kappa \le \beta \Leftrightarrow \kappa \le \alpha \wedge \beta
	\label{prop:commonlowerbound}
\end{equation}
\begin{proof}
($\Rightarrow$) For any $K \in \kappa$ we have sets $A \in \alpha, B \in \beta$, such that $K \subseteq A \cap B$.\\
($\Leftarrow$) For any $K \in \kappa$, pick $A \cap B \in \alpha \wedge \beta$ such that $K \subseteq A \cap B$. Clearly $K \subseteq A$ and $K \subseteq B$.
\end{proof}
\end{prop}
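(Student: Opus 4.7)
The plan is to unfold both directions using the explicit characterisation of the order on $AMT(N)$ from equation~(\ref{eq:antimototonicordering}), namely $\gamma \le \delta$ iff every $G \in \gamma$ is contained in some $D \in \delta$, together with the definition of $\alpha \wedge \beta$ as $sup$ of the intersection family $\{A \cap B \mid A \in \alpha, B \in \beta\}$.

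For the forward direction, I would fix an arbitrary $K \in \kappa$. From $\kappa \le \alpha$ there is some $A \in \alpha$ with $K \subseteq A$, and from $\kappa \le \beta$ there is some $B \in \beta$ with $K \subseteq B$. Hence $K \subseteq A \cap B$. The set $A \cap B$ lies in the family $\{A' \cap B' \mid A' \in \alpha, B' \in \beta\}$, so by the definition of $sup$ in~(\ref{def:sup}) there exists a maximal element $C \in \alpha \wedge \beta$ with $A \cap B \subseteq C$. Then $K \subseteq C$, and since $K$ was arbitrary, $\kappa \le \alpha \wedge \beta$.

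For the backward direction, fix $K \in \kappa$. Since $\kappa \le \alpha \wedge \beta$, there is $C \in \alpha \wedge \beta$ with $K \subseteq C$. By construction of $\alpha \wedge \beta$, every element of $\alpha \wedge \beta$ is itself of the form $A \cap B$ with $A \in \alpha$, $B \in \beta$, so $K \subseteq A \cap B \subseteq A$ and $K \subseteq A \cap B \subseteq B$. Applying~(\ref{eq:antimototonicordering}) in both coordinates gives $\kappa \le \alpha$ and $\kappa \le \beta$.

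The only subtlety — and the one place I would take care — is the interaction with $sup$: the raw family $\{A \cap B\}$ typically contains non-maximal intersections, so in the $(\Rightarrow)$ direction one cannot claim $A \cap B \in \alpha \wedge \beta$ directly but must pass to a maximal element containing it, and symmetrically in the $(\Leftarrow)$ direction one must remember that elements of $\alpha \wedge \beta$ really do arise as intersections $A \cap B$ with $A \in \alpha$, $B \in \beta$. Once those two observations are made explicit, the proof is a direct translation of the definitions and requires no further machinery.
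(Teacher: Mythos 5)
Your proof is correct and follows the same route as the paper's: unfold the order via~(\ref{eq:antimototonicordering}) and the definition of $\wedge$, chase an arbitrary $K \in \kappa$ into an intersection $A \cap B$ in one direction and out of one in the other. The only difference is that you make explicit the passage through $sup$ to a maximal element, a point the paper's two-line proof leaves implicit; this is a harmless (indeed welcome) elaboration, not a different argument.
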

$\alpha \wedge \beta$ is the largest anti-monotonic function that is smaller than both $\alpha$ and $\beta$.
\subsection{Join}
We define the idempotent join operator $\vee$ as follows
\begin{equation}
\forall \alpha,\beta \in AMT(1,n):\alpha \vee \beta = sup(\alpha \cup \beta)
\label{def:externalsum}
\end{equation}
$\alpha \vee \beta$ is antimonotonic.
One finds for $\alpha \neq \emptyset$:
\begin{eqnarray}
\alpha \vee \emptyset & = & \alpha \nonumber\\
\alpha \vee \{\emptyset\} & = & \alpha
\label{prop:externalsum}
\end{eqnarray}
\begin{prop}[Least common upper bound]
For antimonotonic functions $\alpha,\beta,\kappa$ we have
\begin{equation}
	\kappa \ge \alpha\ and\ \kappa \ge \beta \Leftrightarrow \kappa \ge \alpha \vee \beta
	\label{prop:commonupperbound}
\end{equation}
\end{prop}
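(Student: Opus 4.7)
The plan is to prove the two implications by unwinding the definition of $\alpha\vee\beta = \sup(\alpha\cup\beta)$ together with the characterization of the partial ordering given in equation (\ref{eq:antimototonicordering}).

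For the $(\Leftarrow)$ direction, I would first establish the auxiliary fact that $\alpha \le \alpha\vee\beta$ and $\beta\le\alpha\vee\beta$. Given $A\in\alpha$, either $A$ is maximal in $\alpha\cup\beta$, hence $A\in\alpha\vee\beta$ and $A\subseteq A$ witnesses the ordering, or there is a strictly larger element $T\in\alpha\cup\beta$. Since $N$ is finite, iterating produces, after finitely many steps, a maximal element $T^\ast\in\alpha\cup\beta$ with $A\subseteq T^\ast$; by definition $T^\ast\in\sup(\alpha\cup\beta)=\alpha\vee\beta$, which gives $\alpha\le\alpha\vee\beta$ via (\ref{eq:antimototonicordering}). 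The same argument applies to $\beta$. Transitivity of $\le$ then yields $\kappa\ge\alpha$ and $\kappa\ge\beta$ from $\kappa\ge\alpha\vee\beta$.

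For the $(\Rightarrow)$ direction, assume $\kappa\ge\alpha$ and $\kappa\ge\beta$. Pick an arbitrary $S\in\alpha\vee\beta$. Since $\sup(\alpha\cup\beta)\subseteq\alpha\cup\beta$ as sets of subsets of $N$, we have $S\in\alpha$ or $S\in\beta$. In the first case $\kappa\ge\alpha$ supplies $K\in\kappa$ with $S\subseteq K$; in the second case $\kappa\ge\beta$ does. Either way, each element of $\alpha\vee\beta$ is dominated by an element of $\kappa$, which is exactly $\kappa\ge\alpha\vee\beta$ by (\ref{eq:antimototonicordering}).

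No step looks genuinely difficult; the only point requiring a moment of care is the ``climb to a maximal element'' argument used to show $\alpha\le\alpha\vee\beta$, which relies on finiteness of $N$ (equivalently, the ascending chain condition in $2^N$). Everything else is a direct unwinding of definitions, so the proof should be short.
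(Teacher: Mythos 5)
Your proof is correct; the paper itself states this proposition without proof (only the dual meet property, the largest common lower bound, gets one), and your argument is the same direct unwinding of the definitions of $\vee$, $sup$ and the ordering characterisation of $AMT(n)$ that the paper uses for that dual statement. The one step you flag as needing care, the finite climb to a maximal element of $\alpha \cup \beta$, is sound and in fact terminates after at most one step, since $\alpha$ and $\beta$ are themselves antichains.
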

$\alpha \vee \beta$ is the smallest anti-monotonic function dominating both $\alpha$ and $\beta$.
\subsection{External product}
We define the span of an anti-monotonic function $\alpha$ as
\begin{definition}
$sp(\alpha) = \cup_{A \in \alpha}{(A)}$ 
\end{definition}
The external product with respect to subsets of $N$ is defined as follows
\label{sec:externalproduct}
%
%\label{sec:generalisation}
\begin{definition}
Let $N$ be a set of integers, 
% $S, T \subseteq N$ with $S \cup T = N$, 
$\alpha,\beta \in AMT(N)$ 
% with $sp(\alpha) \subseteq S, sp(\beta) \subseteq T$. 
%The external product $\alpha \times_{S,T} \beta$ with respect to $S, T$ is given by
\noindent The external product of $\alpha$ and $\beta$ is given by
\begin{equation}
%\alpha \times_{S,T} \beta = max\{\kappa \in AMT(N): \pi_{S}(\kappa) \le \alpha, \pi_{T}(\kappa) \le \beta\}
\alpha \times \beta = max\{\kappa \in AMT(N): sp(\kappa) = sp(\alpha) \cup sp(\beta), \pi_{sp(\alpha)}(\kappa) \le \alpha, \pi_{sp(\beta)}(\kappa) \le \beta\}
\label{def:externalproduct}
\end{equation}
\end{definition} 
\noindent The maximum in definition (\ref{def:externalproduct}) is unique as is shown by the construction
\begin{equation}
%\alpha \times_{S,T} \beta = sup\{(A \backslash T) \cup (B \backslash S) \cup (A \cap B)|A \in \alpha, B \in \beta\}
\alpha \times \beta = sup\{(A \backslash sp(\beta)) \cup (B \backslash sp(\alpha)) \cup (A \cap B)|A \in \alpha, B \in \beta\}
\label{def:externalproductb}
\end{equation}
The operation $\times$ has the associative and commutative properties. It is idempotent. Its neutral element is $\{\emptyset\}$ and $\emptyset$ is the annihilating element.
The following important property is an immediate consequence of the definition.
\begin{prop}
For $\alpha,\beta,\gamma \in AMT(N)$ we have
\begin{equation}
\label{prop:externalproductseparation}
\gamma \le \alpha \times \beta \Rightarrow \pi_{sp(\alpha)}(\gamma) \le \alpha
\end{equation}
\end{prop}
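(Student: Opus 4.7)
The plan is to observe that this property is essentially a restatement of the defining condition for $\alpha \times \beta$ composed with the order-preservation of the projection. The work is almost entirely unpacking definitions, so I would keep the argument to a short chain of inequalities rather than constructing anything new.

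First I would invoke the definition of the external product in equation (\ref{def:externalproduct}). Since $\alpha \times \beta$ is itself an element of the set over which the maximum is taken, it automatically satisfies the three constraints of that set; in particular it satisfies $\pi_{sp(\alpha)}(\alpha \times \beta) \le \alpha$. This gives me the property for the special case $\gamma = \alpha \times \beta$ for free, and reduces the general case to a monotonicity statement about $\pi_{sp(\alpha)}$.

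Next I would apply the order-conservation property of projection (Property~\ref{prop:orderconservationprojection}) to the hypothesis $\gamma \le \alpha \times \beta$, yielding $\pi_{sp(\alpha)}(\gamma) \le \pi_{sp(\alpha)}(\alpha \times \beta)$. Chaining this with the bound from the previous paragraph and using transitivity of the partial order on $AMT(N)$ then gives $\pi_{sp(\alpha)}(\gamma) \le \alpha$, which is the conclusion.

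There is no real obstacle here: the only point that deserves a brief verification is that the max in (\ref{def:externalproduct}) is actually attained, so that calling $\alpha \times \beta$ an element of the constraint set is justified. This is already guaranteed by the explicit construction in (\ref{def:externalproductb}), so I would simply cite that construction in passing. The whole proof should be two or three lines of inequalities.
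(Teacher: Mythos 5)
Your argument is correct and coincides with the paper's own justification: the paper states this property as an ``immediate consequence of the definition'' (\ref{def:externalproduct}), and the intended reasoning is exactly what you spell out, namely that $\alpha \times \beta$ itself lies in the constraint set (the maximum being attained by the explicit construction (\ref{def:externalproductb})), so $\pi_{sp(\alpha)}(\alpha \times \beta) \le \alpha$, and Property \ref{prop:orderconservationprojection} plus transitivity yields $\pi_{sp(\alpha)}(\gamma) \le \alpha$. No gap; your brief check that the maximum is attained is the only point worth stating explicitly, as you note.
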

%
% An interesting case is $S=sp(\alpha)$, $T=sp(\beta)$. We will denote
%\begin{equation}
%\alpha \times_{sp(\alpha),sp(\beta)} \beta \equiv \alpha \times \beta
%\end{equation}
% !TEX root = Paper.tex
\section{Intervals of antimonotonic functions}
\label{sec:intervals}
For $\alpha \le \beta \in AMT(N)$ we define the intervals
\begin{equation}
{[\alpha,\beta]} = \{\kappa \in AMT(N)| \alpha \le \kappa \le \beta\}.
\end{equation}
\begin{prop}[Intersection of intervals]
For $\alpha,\beta,\alpha',\beta' \in AMT(n)$ the intersection of the intervals $[\alpha,\beta]$ and $[\alpha',\beta']$ is given by
\begin{equation}
	[\alpha,\beta] \cap [\alpha',\beta'] = [\alpha \vee \alpha',\beta \wedge \beta']
	\label{prop:intersectionofintervals}
\end{equation}
\end{prop}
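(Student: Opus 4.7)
The plan is to prove the two set inclusions separately, each by direct application of the two lattice properties already established for $\vee$ and $\wedge$ (Propositions on largest common lower bound and least common upper bound).

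For the forward inclusion $[\alpha,\beta] \cap [\alpha',\beta'] \subseteq [\alpha \vee \alpha',\beta \wedge \beta']$, I would take any $\kappa$ in the intersection. By definition $\alpha \le \kappa$ and $\alpha' \le \kappa$, so the ``least common upper bound'' property gives $\alpha \vee \alpha' \le \kappa$. Dually, $\kappa \le \beta$ and $\kappa \le \beta'$ combined with the ``largest common lower bound'' property give $\kappa \le \beta \wedge \beta'$. Hence $\kappa \in [\alpha \vee \alpha',\beta \wedge \beta']$.

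For the reverse inclusion, take $\kappa \in [\alpha \vee \alpha',\beta \wedge \beta']$. From $\alpha \le \alpha \vee \alpha' \le \kappa \le \beta \wedge \beta' \le \beta$ (using the $\Leftarrow$ directions of both propositions, which immediately imply $\alpha \le \alpha \vee \alpha'$ and $\beta \wedge \beta' \le \beta$ by taking $\kappa = \alpha \vee \alpha'$ and $\kappa = \beta \wedge \beta'$ respectively) one obtains $\alpha \le \kappa \le \beta$, and symmetrically $\alpha' \le \kappa \le \beta'$. So $\kappa$ lies in both $[\alpha,\beta]$ and $[\alpha',\beta']$.

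There is no real obstacle; the only subtlety worth flagging is the degenerate case where the intersection is empty, in which case the right-hand side must also be interpreted as empty. This happens precisely when $\alpha \vee \alpha' \not\le \beta \wedge \beta'$, in which case $[\alpha \vee \alpha',\beta \wedge \beta']$ is vacuous by our convention and both the argument above and the statement remain correct. The proof is really just an instance of the standard fact that in any lattice, intersections of order intervals are order intervals, transported to $AMT(N)$ via the two preceding propositions.
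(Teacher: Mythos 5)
Your proof is correct: the two inclusions follow exactly from the ``least common upper bound'' and ``largest common lower bound'' properties of $\vee$ and $\wedge$, and your handling of the degenerate case $\alpha \vee \alpha' \not\le \beta \wedge \beta'$ (empty interval on both sides) is the right convention. The paper states this proposition without proof, and your argument is precisely the standard lattice-theoretic justification it implicitly relies on, so there is nothing to add.
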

\noindent For $\alpha,\beta \in AMT(n)$ we consider intervals of the form
\begin{equation}
[\alpha \vee \beta,\alpha \times \beta]
\end{equation}
Because $\emptyset$ is the annihilating element and $\{\emptyset\}$ the unit element for the external product
we have
\begin{equation}
[\alpha \vee \emptyset,\alpha \times \emptyset] = \emptyset, 
[\alpha \vee \{\emptyset\},\alpha \times \{\emptyset\}] = \{\alpha\}
\label{prop:intervalemptyset}
\end{equation}
%
% Two further properties of intervals are the uniform span property and the intersection property.
Since $sp(\alpha \vee \beta) = sp(\alpha \times \beta)$, we have 
\begin{prop}[Uniform span]
For $\alpha, \beta \in AMT(n)$, each element $\kappa \in [\alpha \vee \beta,\alpha \times \beta]$ has the same span:
\begin{equation}
	\forall \kappa \in [\alpha \vee \beta,\alpha \times \beta]: 
	sp(\kappa) = sp(\alpha \vee \beta)
	\label{prop:uniformspan}
\end{equation}
\end{prop}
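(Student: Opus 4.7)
The plan is to use a basic order-preservation property of the span together with the fact (already noted in the text just before the proposition) that $sp(\alpha \vee \beta) = sp(\alpha \times \beta)$, and squeeze $sp(\kappa)$ between the two.

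First I would establish an auxiliary monotonicity lemma: for any $\mu,\nu \in AMT(N)$ with $\mu \le \nu$, one has $sp(\mu) \subseteq sp(\nu)$. This is a one-line check from the characterisation of the ordering in equation (\ref{eq:antimototonicordering}): if $x \in sp(\mu)$, pick $A \in \mu$ with $x \in A$; by $\mu \le \nu$ there exists $A' \supseteq A$ with $A' \in \nu$, and then $x \in A' \subseteq sp(\nu)$.

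Next I would apply this lemma twice to the chain $\alpha \vee \beta \le \kappa \le \alpha \times \beta$ to obtain
\begin{equation*}
sp(\alpha \vee \beta) \subseteq sp(\kappa) \subseteq sp(\alpha \times \beta).
\end{equation*}
Finally I would invoke the equality $sp(\alpha \vee \beta) = sp(\alpha \times \beta)$ (both equal $sp(\alpha) \cup sp(\beta)$: for the join by the definition $\alpha \vee \beta = sup(\alpha \cup \beta)$ together with the trivial observation that $sup$ does not change the union of a family, and for the external product by the requirement $sp(\kappa) = sp(\alpha) \cup sp(\beta)$ built into definition (\ref{def:externalproduct})). The two containments above then collapse to $sp(\kappa) = sp(\alpha \vee \beta)$, which is the statement.

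There is no real obstacle; the only mild point is ensuring that the auxiliary monotonicity lemma is correctly stated and used, since it is the single ingredient doing the work. Everything else is bookkeeping on the already-established identity of spans at the two endpoints of the interval.
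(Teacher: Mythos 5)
Your argument is correct and is essentially the paper's own: the paper simply notes $sp(\alpha \vee \beta) = sp(\alpha \times \beta)$ and lets the squeeze along $\alpha \vee \beta \le \kappa \le \alpha \times \beta$ do the rest, with the monotonicity of $sp$ under $\le$ left implicit, which you have correctly supplied. No gap; your write-up just makes the auxiliary monotonicity lemma explicit.
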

%
%Because of $\pi_{sp(\alpha)}(\alpha \vee \beta) = \pi_{sp(\alpha)}(\alpha \times \beta) = \alpha$, and the similar property for $\pi_{sp(\beta)}$, the following invariant holds for intervals., 
\noindent Since $\alpha = \pi_{sp(\alpha)}(\alpha) \leq \pi_{sp(\alpha)}(\alpha \vee \beta) $ and due to the inequality (\ref{prop:externalproductseparation}) for all elements in the interval   $ [\alpha \vee \beta,\alpha \times \beta]$ the following invariant holds.
\begin{prop}
\label{prop:intervalinvariant}
For each $\kappa \in [\alpha \vee \beta,\alpha \times \beta]$ we have $\pi_{sp(\alpha)}(\kappa) = \alpha, \pi_{sp(\beta)}(\kappa) = \beta$.
	\label{prop:intervalinequality}
\end{prop}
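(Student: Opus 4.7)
The plan is to sandwich $\pi_{sp(\alpha)}(\kappa)$ between two copies of $\alpha$ using, respectively, the lower bound $\alpha \vee \beta \le \kappa$ and the upper bound $\kappa \le \alpha \times \beta$. By symmetry, the same argument handles $\pi_{sp(\beta)}(\kappa) = \beta$, so I will only need to write out one side.

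For the lower inequality $\pi_{sp(\alpha)}(\kappa) \ge \alpha$, I would chain two applications of the order-conservation property of projection (Proposition~\ref{prop:orderconservationprojection}). First, since $\alpha$ has support $sp(\alpha)$, one has the trivial identity $\pi_{sp(\alpha)}(\alpha) = \alpha$. Second, $\alpha \le \alpha \vee \beta$ by the defining property of the join (Proposition~\ref{prop:commonupperbound}), and $\alpha \vee \beta \le \kappa$ by the hypothesis $\kappa \in [\alpha \vee \beta,\alpha \times \beta]$. Applying $\pi_{sp(\alpha)}$ to the chain $\alpha \le \alpha \vee \beta \le \kappa$ and invoking order conservation at each step gives
\[
\alpha = \pi_{sp(\alpha)}(\alpha) \le \pi_{sp(\alpha)}(\alpha \vee \beta) \le \pi_{sp(\alpha)}(\kappa).
\]

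For the upper inequality $\pi_{sp(\alpha)}(\kappa) \le \alpha$, the hypothesis gives $\kappa \le \alpha \times \beta$, and Proposition~(\ref{prop:externalproductseparation}) then yields $\pi_{sp(\alpha)}(\kappa) \le \alpha$ directly. Combining the two inequalities produces $\pi_{sp(\alpha)}(\kappa) = \alpha$. The identical argument with the roles of $\alpha$ and $\beta$ swapped (using commutativity of $\vee$ and $\times$) gives $\pi_{sp(\beta)}(\kappa) = \beta$.

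There is essentially no obstacle here: the statement is a direct packaging of two already-proved facts about the operators, one saying that projection respects the order and one saying that being below $\alpha \times \beta$ forces the $sp(\alpha)$-projection to stay below $\alpha$. The only care required is the initial observation $\pi_{sp(\alpha)}(\alpha) = \alpha$, which follows from the definition of projection since every $A \in \alpha$ is already a subset of $sp(\alpha)$, hence $A \cap sp(\alpha) = A$, and the $sup$ in the projection has nothing to remove.
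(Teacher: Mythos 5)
Your proof is correct and follows essentially the same route as the paper: the lower bound $\alpha = \pi_{sp(\alpha)}(\alpha) \le \pi_{sp(\alpha)}(\alpha \vee \beta) \le \pi_{sp(\alpha)}(\kappa)$ via order conservation of the projection, combined with the upper bound $\pi_{sp(\alpha)}(\kappa) \le \alpha$ from Property~(\ref{prop:externalproductseparation}), is exactly the sandwich argument the paper sketches just before stating the invariant. No gaps; your explicit remark that $\pi_{sp(\alpha)}(\alpha) = \alpha$ is a welcome (if minor) addition to what the paper leaves implicit.
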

\noindent These definitions and properties are readily generalised for an arbitrary number of antimonotonic functions $\alpha_1,\dots,\alpha_k$ considering intervals of the type $[\alpha_1 \vee \dots \vee \alpha_k,\alpha_1\times \dots\times\alpha_k]$. To simplify the notation we will use
\begin{eqnarray}
\alpha_1 \vee \dots \vee \alpha_k &=& \vee\{\alpha_1,\dots,\alpha_k \} \nonumber \\
\alpha_1 \times \dots \vee \alpha_k &=& \times\{\alpha_1,\dots,\alpha_k \}
\end{eqnarray}
For a finite set $N$ of integers, we define
\begin{eqnarray}
{\alpha}_N &\equiv& \vee\{\{x\}|x \in N\} = \{\{x\}|x \in N\} \label{def:minalpha} \\
{\omega}_N &\equiv& \times\{\{x\}|x \in N\} = \{N\}\label{def:maxalpha}\\
{\Upsilon}_N & \equiv & [\alpha_N,\omega_N]
\end{eqnarray}
We find $sp({\alpha}_N) = sp({\omega}_N) = N$. The interval $\Upsilon_N = [{\alpha}_N,{\omega}_N]$ is the set of antimonotonic functions with span $N$. We investigate the structure of this interval in the next paragraph.
%

% !TEX root = Paper.tex
\section{Decomposition of the interval $\Upsilon_N$}
\label{sec:uniformspandecomposition}
The special intervals $[\alpha \vee \beta,\alpha \times \beta]$ for $\alpha,\beta \in AMT(N)$ from section \ref{sec:intervals} allow decomposing the interval $\Upsilon_N$, effectively reducing its dimensionality. That is why we refer to the following theorem as the ``General coordinate system'' theorem.
\begin{theorem}[General coordinate system]
\label{the:generalcoordinatesystem}
Given a finite set of positive integers $N$ and an antimonotonic function $\sigma$ with $sp(\sigma) = N$,
the set of nonempty intervals of the type
\[
{[\vee\{\kappa_S|S \in \sigma\},\times\{\kappa_S|S \in \sigma\}]}
\]
where $\{\kappa_S|S \in \sigma\}$ is any family of antimonotonic functions satisfying $\forall S \in \sigma:\kappa_S \in \Upsilon_S$, form a partition of $\Upsilon_N$.
\begin{proof}
In order to prove that the intervals from this set are disjoint, consider two families $\{\kappa_S|S \in \sigma\}$  and $\{\kappa'_S|S \in \sigma\}$. Let $S_1 \in \sigma$ be any set in the antimonotonic function $\sigma$ and let $\gamma \in {[\vee\{\kappa_S|S \in \sigma\},\times\{\kappa_S|S \in \sigma\}]} \cap {[\vee\{\kappa'_S|S \in \sigma\},\times\{\kappa'_S|S \in \sigma\}]}$ be any element in the intersection of the two intervals. We find
\begin{eqnarray}
\kappa_{S_1} & \le & \pi_{S_1}(\gamma) \le \kappa'_{S_1} \nonumber \\
\kappa'_{S_1} & \le & \pi_{S_1}(\gamma) \le \kappa_{S_1}
\end{eqnarray}
and conclude that $\kappa_{S_1} = \kappa'_{S_1}$ if the intervals have at least one element in common.
Two intervals are thus either equal or disjoint.
\\
\\
The union of all intervals is $\Upsilon_N$ since each $\gamma \in \Upsilon_N$ satisfies
\[
\gamma \in [\vee\{\pi_{S}(\gamma)|S \in \sigma\},\times \{\pi_{S}(\gamma)|S \in \sigma\}]
\]
Since for any $\gamma \in \Upsilon_N$, $sp(\gamma) = N$, we find that for any $S \in \sigma$, $sp(\pi_S(\gamma) )= S$ and hence $\pi_S(\gamma) \in \Upsilon_S$. This completes the proof.
\end{proof}
\end{theorem}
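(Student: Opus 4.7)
The plan is to establish two things: disjointness of the intervals from different families, and that the intervals collectively cover $\Upsilon_N$. Both follow from the key observation that the projections $\pi_S$ act as coordinate readouts, together with the $k$-fold extension of Proposition~\ref{prop:intervalinvariant}, which the excerpt explicitly announces as a routine generalisation of the two-argument case.

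For disjointness, I would suppose $\gamma$ lies in the intersection of the intervals determined by two families $\{\kappa_S \mid S\in\sigma\}$ and $\{\kappa'_S \mid S\in\sigma\}$. Applying the generalised invariant to each factor yields $\pi_S(\gamma)=\kappa_S$ and $\pi_S(\gamma)=\kappa'_S$ for every $S\in\sigma$, forcing $\kappa_S=\kappa'_S$ for all $S$. Hence the two families, and therefore the two intervals, coincide; any two distinct intervals in the collection are disjoint.

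For covering, given $\gamma\in\Upsilon_N$ I would set $\kappa_S := \pi_S(\gamma)$ for each $S\in\sigma$, then verify $\kappa_S\in\Upsilon_S$ and $\gamma\in[\vee\{\kappa_S\},\times\{\kappa_S\}]$. The first claim, $sp(\pi_S(\gamma))=S$, follows because $sp(\gamma)=N\supseteq S$ gives, for each $x\in S$, some $A\in\gamma$ with $x\in A$, whence $x\in A\cap S$ and $x$ appears in the span of $\pi_S(\gamma)$; the reverse inclusion is immediate from $\pi_S(\gamma)\subseteq 2^S$. For the lower bound $\vee\{\kappa_S\}\le\gamma$, observe that $A\cap S\subseteq A$ for every $A\in\gamma$, so each $\kappa_S=\pi_S(\gamma)\le\gamma$ in the induced ordering, and the join is the least upper bound. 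For the upper bound $\gamma\le\times\{\kappa_S\}$, I would appeal directly to the defining maximality property (\ref{def:externalproduct}) of the external product: $\gamma$ itself is a candidate antimonotonic function, since $sp(\gamma)=N=\cup_{S\in\sigma} sp(\kappa_S)$ and $\pi_S(\gamma)=\kappa_S\le\kappa_S$ for every $S$, so $\gamma$ is dominated by the maximum $\times\{\kappa_S\}$.

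The only real obstacle is conceptual rather than technical: one must trust that the two-argument invariant $\pi_{sp(\alpha)}(\kappa)=\alpha$, $\pi_{sp(\beta)}(\kappa)=\beta$ on $[\alpha\vee\beta,\alpha\times\beta]$ extends verbatim to the $|\sigma|$-fold case, which is precisely the extension the authors allude to just before the theorem. Once that is accepted, disjointness is a one-line consequence and covering amounts to checking that the projections of any $\gamma\in\Upsilon_N$ serve as its own ``coordinates'' in the proposed decomposition.
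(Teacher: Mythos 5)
Your proposal is correct and follows essentially the same route as the paper: disjointness by reading off each $\kappa_S$ as $\pi_S(\gamma)$ for a common element $\gamma$ (the $k$-fold form of the interval invariant, which the paper obtains via the same sandwich of projection inequalities), and covering by showing every $\gamma\in\Upsilon_N$ lies in the interval built from its own projections $\pi_S(\gamma)\in\Upsilon_S$. You merely spell out the lower- and upper-bound checks in the covering step a bit more explicitly than the paper does.
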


\begin{remark}
\label{rem:restrictioncoordinatesystem}
The restriction to antimonotonic functions $\sigma$ in Theorem \ref{the:generalcoordinatesystem} is not necessary for the theorem to hold.
If however, in the notation of Theorem \ref{the:generalcoordinatesystem}, $S_1 \subsetneq S_2$, we have 
for $\kappa_{S_1} \in \Upsilon_{S_1}, \kappa_{S_2} \in \Upsilon_{S_2}$: 
\begin{eqnarray}
\kappa_{S_1} \vee \kappa_{S_2} & \in  & \Upsilon_{S_2} \nonumber \\
\kappa_{S_1} \times \kappa_{S_2} & = & \kappa_{S_1} \wedge \kappa_{S_2} \le \kappa_{S_1} \vee \kappa_{S_2}  \nonumber
\end{eqnarray}
with equality only if $\kappa_{S_1} = \pi_{S_1}(\kappa_{S_2})$. 
Consequently, any interval in the decomposition can be nonempty only if the latter condition is satisfied, leaving no freedom for the choice of $\kappa_{S_1}$.
\end{remark}

\begin{remark}
\label{rem:notnonemptycoordinatesystem}
The restriction does not remove all empty intervals from the decomposition.
If, still in the notation of Theorem \ref{the:generalcoordinatesystem}, two sets $S_1, S_2 \in \sigma$ are not disjoint and
\begin{equation}
\label{eq:interactioncoordinatesystem}
\pi_{S_1 \cap S_2}(\kappa_{S_1}) \neq \pi_{S_1 \cap S_2}(\kappa_{S_2})
\end{equation}
we find that
\[
\pi_{S_1 \cap S_2}(\kappa_{S_1} \vee \kappa_{S_2} ) \not\le 
\pi_{S_1 \cap S_2}(\kappa_{S_1} \times \kappa_{S_2} )
\]
and consequently
\[
\kappa_{S_1} \vee \kappa_{S_2} \not\le 
\kappa_{S_1} \times \kappa_{S_2}
\]
making an empty interval.
\end{remark}

\noindent Although the general form of Theorem \ref{the:generalcoordinatesystem} allows for studying symmetries in the decomposition,
the next theorem removes the empty intervals from Remark \ref{rem:notnonemptycoordinatesystem} by requiring $\sigma$ to contain only disjoint sets, i.e. to be a partition of $N$.
This restriction avoids the interaction expressed by condition (\ref{eq:interactioncoordinatesystem})
between the lower dimensional subspaces, hence its name ``orthogonal coordinate system theorem''.

\begin{theorem}[Orthogonal coordinate System]
\label{the:orthogonalcoordinatesystem}
For a finite set of positive integers $N$ and a partition $\sigma$ of this set, the set of intervals of the type
\[
{[\vee\{\kappa_S|S \in \sigma\},\times\{\kappa_S|S \in \sigma\}]}
\]
where $\{\kappa_S|S \in \sigma\}$ is any family of antimonotonic functions satisfying $\forall S \in \sigma:\kappa_S \in \Upsilon_S$,
form a partition of $\Upsilon_N$.
\begin{proof}
This theorem is a special case of Theorem \ref{the:generalcoordinatesystem}.
\end{proof}
\end{theorem}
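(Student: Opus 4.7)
The plan is to derive this statement directly from Theorem \ref{the:generalcoordinatesystem}, as the authors indicate. The main work is therefore bookkeeping: checking that any partition $\sigma$ of $N$ qualifies as an admissible $\sigma$ in the general coordinate system theorem, and then observing that the partition structure removes the interaction problem flagged in Remark \ref{rem:notnonemptycoordinatesystem}.

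First, I would verify that a partition $\sigma$ of $N$ is an antimonotonic function on $N$ with $sp(\sigma) = N$. By definition, the parts of $\sigma$ are nonempty, pairwise disjoint, and their union is $N$. In particular, for distinct $S, S' \in \sigma$ we have $S \cap S' = \emptyset$, so neither $S \subsetneq S'$ nor $S' \subsetneq S$ can hold. Hence $\sigma$ is an antichain in $2^N$, and by Definition of antimonotonicity $\sigma \in AMT(N)$. The span identity $sp(\sigma) = \bigcup_{S \in \sigma} S = N$ is immediate. Thus $\sigma$ satisfies the hypothesis of Theorem \ref{the:generalcoordinatesystem}, which immediately produces a partition of $\Upsilon_N$ into intervals of the claimed type.

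The only genuinely new content is that here we do not need to discard empty intervals, i.e. every such interval is nonempty, so every family $\{\kappa_S\}_{S \in \sigma}$ indexes a block of the partition. To see this I would invoke Remark \ref{rem:notnonemptycoordinatesystem}: an interval in the general decomposition can be empty only when two indexing sets $S_1, S_2 \in \sigma$ satisfy $S_1 \cap S_2 \neq \emptyset$ and $\pi_{S_1 \cap S_2}(\kappa_{S_1}) \neq \pi_{S_1 \cap S_2}(\kappa_{S_2})$. Since $\sigma$ is a partition, $S_1 \cap S_2 = \emptyset$ whenever $S_1 \neq S_2$, so this condition is vacuous. Concretely, using the construction (\ref{def:externalproductb}), for disjoint $S_1, S_2$ and $A \in \kappa_{S_1}, B \in \kappa_{S_2}$ one has $A \cap B = \emptyset$ and $A \setminus sp(\kappa_{S_2}) = A$, so $A \cup B \in \kappa_{S_1} \times \kappa_{S_2}$, whence $\kappa_{S_1} \vee \kappa_{S_2} \le \kappa_{S_1} \times \kappa_{S_2}$ and the interval is nonempty.

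I do not expect a main obstacle here; the proof is genuinely a corollary. The only subtlety worth stating cleanly is the observation that partitions are antichains, which is what lets Theorem \ref{the:generalcoordinatesystem} apply, together with the disjointness argument above to rule out empty blocks.
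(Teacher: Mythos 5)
Your proof is correct and follows essentially the same route as the paper, which simply cites Theorem \ref{the:generalcoordinatesystem}; your added bookkeeping (a partition is an antichain with span $N$, and disjointness of the parts makes every interval nonempty via construction (\ref{def:externalproductb})) just makes explicit what the paper leaves to Remark \ref{rem:notnonemptycoordinatesystem} and the surrounding discussion.
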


\ \\
\noindent Given two finite sets $N1 \neq N2$, the corresponding intervals of uniform span $\Upsilon_{N_1}$ and
$\Upsilon_{N_2}$ are disjoint.
The family $(\Upsilon_S|S \subseteq N)$ forms a partition of  $AMT(N) \backslash \{\emptyset\}$ and we have the disjoint union
\begin{equation}
AMT(N) = \{\emptyset\} \cup (\bigcup_{S \subseteq N} \Upsilon_S)
\label{eq:fulldecomposition}
\end{equation}

\begin{corollary}
The following expansion is an immediate consequence
\begin{equation}
\label{eq:uniformspanexpansion}
|AMT(n)| = 1
+   \binom{n}{0}|\Upsilon_{\emptyset}|         
+   \binom{n}{1} |\Upsilon_{\{1\}}|                  
+   \binom{n}{2} |\Upsilon_{\{1,2\}}|                                         
+  \dots                                                                                                    
+   |\Upsilon_{\{1,\dots,n\}}|                                         
\end{equation}
\end{corollary}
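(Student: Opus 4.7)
The plan is to take cardinalities on both sides of the disjoint union decomposition~(\ref{eq:fulldecomposition}). Since the union is disjoint, this gives immediately
\[
|AMT(n)| = 1 + \sum_{S \subseteq P_n} |\Upsilon_S|,
\]
so the whole task reduces to grouping the $2^n$ terms on the right-hand side according to the cardinality $|S|$.

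The key step is to observe that $|\Upsilon_S|$ depends only on $|S|$ and not on which particular subset of $P_n$ is chosen. I would establish this by fixing a bijection $\varphi : S \to S'$ between two equal-sized subsets of $P_n$ and extending it pointwise to subsets of $P_n$, and then to families of such subsets. The induced map sends an antimonotonic function with span $S$ to one with span $S'$, respects the antichain structure as well as the ordering defined in Section~\ref{antimonotonic}, and admits an obvious inverse coming from $\varphi^{-1}$. Hence $|\Upsilon_S| = |\Upsilon_{S'}|$ whenever $|S| = |S'|$.

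Once the invariance is in hand, there are exactly $\binom{n}{k}$ subsets $S \subseteq P_n$ of each size $k$, and each contributes the common value $|\Upsilon_{\{1,\dots,k\}}|$ to the sum. Grouping by cardinality therefore yields
\[
|AMT(n)| = 1 + \sum_{k=0}^{n} \binom{n}{k}\,|\Upsilon_{\{1,\dots,k\}}|,
\]
which is precisely the claimed expansion. The main (and rather minor) obstacle is the permutation-symmetry argument itself; all the substantive content has already been absorbed into Theorem~\ref{the:orthogonalcoordinatesystem} and the disjoint decomposition~(\ref{eq:fulldecomposition}), so the remainder of the proof is pure bookkeeping.
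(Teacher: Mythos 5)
Your proposal is correct and follows essentially the same route as the paper: the corollary is obtained by taking cardinalities in the disjoint union (\ref{eq:fulldecomposition}) and grouping the subsets $S \subseteq P_n$ by size. The relabeling bijection showing $|\Upsilon_S|$ depends only on $|S|$ is exactly the symmetry step the paper leaves implicit when it calls the expansion an immediate consequence.
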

\noindent Equation (\ref{eq:fulldecomposition}) leads to the following decomposition of $AMT(N)$.
\begin{corollary}
\begin{equation}
AMT(N) = \{\emptyset\} \cup ( \bigcup_{(\kappa_S \in AMT(S)\backslash \{\emptyset\}|S \in \sigma)}[\vee \{\kappa_S|S \in \sigma\},\times \{\kappa_S|S \in \sigma \}])
\end{equation}
where the union is taken over all families of antimonotonic functions from $AMT(S)$, one for each $S \in \sigma$.
\end{corollary}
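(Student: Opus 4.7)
The plan is to derive the statement by combining the full decomposition (\ref{eq:fulldecomposition}), which splits $AMT(N) \setminus \{\emptyset\}$ into the uniform-span slices $\Upsilon_T$ for $T \subseteq N$, with Theorem~\ref{the:orthogonalcoordinatesystem}, which refines each slice using a partition. The corollary effectively repackages this double decomposition as a single union indexed by families $(\kappa_S)_{S \in \sigma}$ with $\kappa_S \in AMT(S) \setminus \{\emptyset\}$, where the nonempty $\kappa_S$ is allowed to have arbitrary span inside $S$.

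For the covering direction, I would take an arbitrary $\gamma \in AMT(N) \setminus \{\emptyset\}$, set $T := sp(\gamma) \subseteq N$ so that $\gamma \in \Upsilon_T$, and define $\kappa_S := \pi_S(\gamma)$ for every $S \in \sigma$. Since $\gamma$ is nonempty, each $\kappa_S$ is at worst $\{\emptyset\}$ (precisely when $S \cap T = \emptyset$), hence belongs to $AMT(S) \setminus \{\emptyset\}$. Because $\{\emptyset\}$ is the neutral element of both $\vee$ and $\times$ by (\ref{prop:externalsum}) and the properties listed after (\ref{def:externalproductb}), the $S$'s disjoint from $T$ do not change the interval $[\vee\{\kappa_S\},\times\{\kappa_S\}]$, which therefore coincides with the interval built from the induced partition $\{S \cap T : S \in \sigma,\, S \cap T \neq \emptyset\}$ of $T$. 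Theorem~\ref{the:orthogonalcoordinatesystem} applied to $\Upsilon_T$ with this refined partition then places $\gamma$ inside that interval.

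For disjointness, suppose $\gamma$ lies in two intervals corresponding to families $(\kappa_S)$ and $(\kappa'_S)$. By Proposition~\ref{prop:uniformspan}, both intervals have uniform span equal to $\bigcup_{S \in \sigma} sp(\kappa_S)$, so necessarily $sp(\kappa_S) = sp(\kappa'_S) =: T_S \subseteq S$ for every $S$. Because $\sigma$ is a partition of $N$, any $A \in \gamma$ satisfies $A \cap S = A \cap T_S$, and therefore $\pi_S(\gamma) = \pi_{T_S}(\gamma)$. Property~\ref{prop:intervalinvariant}, iterated exactly as in the proof of Theorem~\ref{the:generalcoordinatesystem}, then yields $\kappa_S = \pi_{T_S}(\gamma) = \pi_S(\gamma) = \kappa'_S$ for every $S \in \sigma$, proving that the two families agree.

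The main obstacle I expect is the bookkeeping at the boundary case where some $S \in \sigma$ does not meet $sp(\gamma)$: one must verify cleanly that assigning $\kappa_S = \{\emptyset\}$ is simultaneously legal (since $\{\emptyset\} \neq \emptyset$) and neutral for $\vee$ and $\times$, so that the passage between the corollary's union over $\sigma$ and the orthogonal decomposition of $\Upsilon_T$ over $\sigma|_T$ is lossless in both the covering and the uniqueness arguments.
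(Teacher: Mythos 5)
Your proposal is correct and follows exactly the route the paper intends: the corollary is stated there as an immediate consequence of equation (\ref{eq:fulldecomposition}) together with Theorem \ref{the:orthogonalcoordinatesystem}, which is precisely the combination you carry out. Your extra care with the boundary case $\kappa_S=\{\emptyset\}$ (legal since $\{\emptyset\}\neq\emptyset$, and neutral for $\vee$ and $\times$) and the span argument $sp(\kappa_S)=sp(\gamma)\cap S$ for disjointness are sound fillings-in of details the paper leaves implicit.
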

\begin{corollary}
A consequence of the decomposition in equation (\ref{eq:fulldecomposition}) is
\begin{equation}
|AMT(N)| = 1 + \sum_{S \subseteq N} |\Upsilon_S|
\end{equation}
\label{cor:fulldecomposition}
\end{corollary}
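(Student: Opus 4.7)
The plan is to observe that this corollary follows immediately from equation (\ref{eq:fulldecomposition}) by taking cardinalities on both sides, provided we verify that the union on the right is genuinely disjoint so that cardinality is additive.

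First I would recall that $AMT(N) = \{\emptyset\} \cup \bigl(\bigcup_{S \subseteq N} \Upsilon_S\bigr)$, and check two disjointness claims. On the one hand, for every $S \subseteq N$ the interval $\Upsilon_S = [\alpha_S,\omega_S]$ consists of antimonotonic functions whose span equals $S$ (in particular, each element is nonempty since it contains at least the singletons of $S$ when $S \neq \emptyset$, and equals $\{\emptyset\}$ when $S = \emptyset$), so none of them equals the empty antimonotonic function $\emptyset$. On the other hand, if $S_1 \neq S_2$ then any $\kappa \in \Upsilon_{S_1} \cap \Upsilon_{S_2}$ would simultaneously satisfy $sp(\kappa) = S_1$ and $sp(\kappa) = S_2$, which is impossible. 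Hence the union is disjoint.

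Next I would apply additivity of cardinality over a disjoint union: $|\{\emptyset\}| = 1$ and the remaining terms contribute $\sum_{S \subseteq N} |\Upsilon_S|$, giving the claimed identity. Since equation (\ref{eq:fulldecomposition}) is stated just above and the decomposition into uniform-span intervals is already established, no further machinery is needed.

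The only subtle point — and the one I would make explicit — is the boundary case $S = \emptyset$: here $\Upsilon_\emptyset = [\alpha_\emptyset,\omega_\emptyset] = [\{\emptyset\},\{\emptyset\}] = \{\{\emptyset\}\}$, which contributes $1$ to the sum and is genuinely distinct from the element $\emptyset$ (the empty antichain) that we separated out. Everything else is bookkeeping, so this is not really an obstacle; the step requiring the most care is simply making sure the reader does not conflate $\emptyset$ with $\{\emptyset\}$.
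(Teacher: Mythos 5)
Your proposal is correct and follows essentially the same route as the paper, which treats the identity as an immediate consequence of the disjoint union $AMT(N) = \{\emptyset\} \cup \bigl(\bigcup_{S \subseteq N} \Upsilon_S\bigr)$, with the $\Upsilon_S$ pairwise disjoint because their elements have distinct spans. Your explicit handling of the case $S = \emptyset$ (distinguishing the empty antichain $\emptyset$ from $\{\emptyset\}$) is a reasonable clarification of a point the paper leaves implicit, but it is not a departure from the paper's argument.
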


\noindent The two theorems allow to derive a number of recursion relations for Dedekind-like numbers.
The first one is an immediate consequence.
\begin{corollary}\label{cor:uniformspanintervalcounting}
For positive integers $n_1$ and $n$ such that $1 \le n_1 < n$ we have
\begin{equation}
|\Upsilon_{\{1,\dots, n\}}| = \sum_{\alpha \in \Upsilon_{\{1,\dots ,n_1\}},
\beta \in \Upsilon_{\{n_1+1,\dots, n\}}}{|[\alpha \vee \beta,\alpha \times \beta]|}
\end{equation}
\end{corollary}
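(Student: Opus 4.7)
The plan is to apply Theorem \ref{the:orthogonalcoordinatesystem} directly to the two-block partition $\sigma = \{S_1, S_2\}$ of $N = \{1, \ldots, n\}$, where $S_1 = \{1, \ldots, n_1\}$ and $S_2 = \{n_1+1, \ldots, n\}$. The hypothesis $1 \le n_1 < n$ is exactly what is needed to guarantee that both $S_1$ and $S_2$ are non-empty and that together they partition $N$, so $\sigma$ satisfies the requirement of Theorem \ref{the:orthogonalcoordinatesystem}.

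With this choice of $\sigma$, a family $\{\kappa_S \mid S \in \sigma\}$ with $\kappa_S \in \Upsilon_S$ collapses to a pair $(\kappa_{S_1}, \kappa_{S_2}) = (\alpha, \beta)$ with $\alpha \in \Upsilon_{S_1}$ and $\beta \in \Upsilon_{S_2}$. The interval $[\vee\{\kappa_S \mid S \in \sigma\}, \times\{\kappa_S \mid S \in \sigma\}]$ then becomes $[\alpha \vee \beta,\, \alpha \times \beta]$, and the theorem yields the disjoint decomposition
\[
\Upsilon_N \;=\; \bigsqcup_{\substack{\alpha \in \Upsilon_{S_1} \\ \beta \in \Upsilon_{S_2}}} [\alpha \vee \beta,\, \alpha \times \beta].
\]

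Taking cardinalities and using additivity over disjoint unions produces the claimed identity. There is no genuine obstacle here: the content of the corollary is entirely contained in Theorem \ref{the:orthogonalcoordinatesystem}, and the corollary simply records the counting identity obtained by specialising to the coarsest non-trivial partition of $\{1,\dots,n\}$ into an initial segment and its complement. The same argument applied to an arbitrary partition of $N$ would yield a family of analogous recursion formulae; the statement here singles out the two-block case.
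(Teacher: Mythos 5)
Your proposal is correct and follows exactly the paper's route: the paper presents this corollary as an immediate consequence of Theorem \ref{the:orthogonalcoordinatesystem} applied to the two-block partition $\{\{1,\dots,n_1\},\{n_1+1,\dots,n\}\}$, with the counting identity obtained by summing cardinalities over the resulting disjoint intervals. Nothing is missing.
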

\noindent Slight rearrangement in the sums allows to derive the following recursion relation.
\begin{corollary}\label{cor:intervalrecursioncounting}
For positive integers $n_1$ and $n$ such that $1 \le n_1 < n$ we have
\begin{equation}
|AMT(1,n)| = 1+\sum_{\alpha \in AMT(1,n_1)\backslash\{\emptyset\},
\beta \in AMT(n_1+1,n)\backslash\{\emptyset\}}{|[\alpha \vee \beta,\alpha \times \beta]|}
\end{equation}
\end{corollary}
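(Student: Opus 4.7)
The plan is to derive this recursion by applying Theorem \ref{the:orthogonalcoordinatesystem} not only to $N=\{1,\ldots,n\}$ but to every subset $S \subseteq \{1,\ldots,n\}$, then combining with Corollary \ref{cor:fulldecomposition} and reindexing the resulting triple sum as a double sum over $(\alpha,\beta) \in (AMT(1,n_1)\backslash\{\emptyset\}) \times (AMT(n_1+1,n)\backslash\{\emptyset\})$.

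Concretely, I would first invoke Corollary \ref{cor:fulldecomposition} to write
\[
|AMT(1,n)| = 1 + \sum_{S \subseteq \{1,\ldots,n\}} |\Upsilon_S|.
\]
For each $S$, split it as $S = S_1 \cup S_2$ with $S_1 = S \cap \{1,\ldots,n_1\}$ and $S_2 = S \cap \{n_1+1,\ldots,n\}$. Theorem \ref{the:orthogonalcoordinatesystem} applied to the partition $\{S_1,S_2\}$ of $S$ then yields
\[
|\Upsilon_S| = \sum_{\alpha \in \Upsilon_{S_1},\, \beta \in \Upsilon_{S_2}} |[\alpha \vee \beta,\alpha \times \beta]|.
\]
Substituting and exchanging the order of summation, the outer sum over $S$ splits into independent sums over $S_1 \subseteq \{1,\ldots,n_1\}$ and $S_2 \subseteq \{n_1+1,\ldots,n\}$. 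By the decomposition (\ref{eq:fulldecomposition}) applied to both $AMT(1,n_1)$ and $AMT(n_1+1,n)$, these repackage as sums over $\alpha \in AMT(1,n_1)\backslash\{\emptyset\}$ and $\beta \in AMT(n_1+1,n)\backslash\{\emptyset\}$, producing exactly the claimed identity.

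The main subtlety I expect is the degenerate case where one of $S_1, S_2$ is empty, in which case $\{S_1, S_2\}$ is not strictly a two-block partition of $S$. To apply the formula uniformly I would interpret $\Upsilon_\emptyset$ as $\{\{\emptyset\}\}$, treating $\{\emptyset\}$ as the ``empty-span coordinate''. Because $\{\emptyset\}$ is the neutral element of $\times$ and acts as an identity under $\vee$ when paired with any $\beta \in \Upsilon_{S_2}$ with $S_2 \neq \emptyset$, the interval $[\{\emptyset\}\vee\beta,\{\emptyset\}\times\beta]$ collapses to $\{\beta\}$ and contributes exactly $|\Upsilon_{S_2}|$, as needed; a symmetric remark handles $\beta = \{\emptyset\}$. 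The unique element escaping every such interval is $\gamma = \emptyset \in AMT(1,n)$, which is precisely what the explicit $+1$ accounts for, while the pair $\alpha = \beta = \{\emptyset\}$ produces the singleton interval $\{\{\emptyset\}\}$ capturing $\gamma = \{\emptyset\} \in AMT(1,n)$. Once this bookkeeping is settled, the rearrangement is purely mechanical.
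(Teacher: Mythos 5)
Your argument is correct and follows essentially the same route the paper intends: the paper derives this corollary as a ``slight rearrangement in the sums'' of the span decomposition (Corollary \ref{cor:fulldecomposition}) combined with the orthogonal coordinate system theorem applied to each span $S$ split along $\{1,\dots,n_1\}$ and $\{n_1+1,\dots,n\}$, which is exactly what you spell out. Your explicit bookkeeping of the degenerate cases ($S_1$ or $S_2$ empty, with $\{\emptyset\}$ as the neutral ``empty-span coordinate'', and the $+1$ for the empty antichain) is a welcome clarification of details the paper leaves implicit, but it is not a different proof.
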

\noindent An interesting special case of Corollary \ref{cor:intervalrecursioncounting} is $n_1 = n-1$:
\begin{corollary} For any positive integer $n > 1$
\begin{equation}
|AMT(1,n)| = \sum_{\alpha \in AMT(1,n-1)}{|[\emptyset,\alpha]|}	
\label{cor:oneelementrecursioncounting}
\end{equation}
\begin{proof}
Let $n_1 = n - 1$. We have:
\begin{equation}
AMT(n_1+1,n) \backslash \{\emptyset\} = AMT(n,n) \backslash \{\emptyset\} = \{\{\emptyset\},\{n\}\}
\end{equation}
and Corollary \ref{cor:intervalrecursioncounting} implies
\begin{equation}
|AMT(1,n)| = 1+\sum_{\alpha \in AMT(1,n-1)\backslash\{\emptyset\},
\beta \in \{\{\emptyset\},\{\{n\}\}\}}{|[\alpha \vee \beta,\alpha \times \beta]|}
\end{equation}
Since $\{\emptyset\}$ is neutral for $\vee$ and $\times$, this is equivalent to
\begin{equation}
|AMT(1,n)| = |AMT(1,n-1)| + \sum_{\alpha \in AMT(1,n-1)\backslash\{\emptyset\}}{|[\alpha \vee \{\{n\}\},\alpha \times \{\{n\}\}]|}
\end{equation}
Each element $\kappa$ of $[\alpha \vee \{\{n\}\},\alpha \times \{\{n\}\}]$ can be written as $\alpha \vee (\kappa' \times \{\{n\}\})$ for some $\kappa' \in [\{\emptyset\},\alpha]$.\footnote{
$sp(\kappa) = sp(\alpha) \cup \{n\}$, $\kappa' = \pi_{sp(\alpha)}(\kappa \backslash \alpha)$.}
Hence
\begin{equation}
|[\alpha \vee \{\{n\}\},\alpha \times \{\{n\}\}]| = |[\{\emptyset\},\alpha]|
\label{cor:intervalsizeoneelement}
\end{equation}
and we find
\begin{eqnarray}
|AMT(1,n)| &=& |AMT(1,n-1)| + \sum_{\alpha \in AMT(1,n-1)\backslash\{\emptyset\}}{|[\{\emptyset\},\alpha]|} \nonumber \\
&=& 1 + \sum_{\alpha \in AMT(1,n-1)\backslash\{\emptyset\}}{|[\emptyset,\alpha]|} \nonumber \\
&=& \sum_{\alpha \in AMT(1,n-1)}{|[\emptyset,\alpha]|} 	
\end{eqnarray}
\end{proof}
\end{corollary}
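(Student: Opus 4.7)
The plan is to specialize Corollary \ref{cor:intervalrecursioncounting} to the split $n_1 = n-1$. Since $AMT(n,n)\setminus\{\emptyset\}=\{\{\emptyset\},\{\{n\}\}\}$, the double sum collapses to a single sum over $\alpha \in AMT(1,n-1)\setminus\{\emptyset\}$ with exactly two contributions per $\alpha$, indexed by $\beta\in\{\{\emptyset\},\{\{n\}\}\}$. The contribution from $\beta=\{\emptyset\}$ is immediate: by \eqref{prop:intervalemptyset}, $[\alpha\vee\{\emptyset\},\alpha\times\{\emptyset\}]=\{\alpha\}$ has size $1$, so these terms contribute $|AMT(1,n-1)\setminus\{\emptyset\}|$, which combined with the leading $1$ from the corollary gives $|AMT(1,n-1)|$.

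The bulk of the work, and the step I expect to be the main obstacle, is proving the interval identity $|[\alpha\vee\{\{n\}\},\alpha\times\{\{n\}\}]|=|[\{\emptyset\},\alpha]|$ hinted at in the footnote. My plan is to exhibit a bijection
\[
\Phi:[\{\emptyset\},\alpha]\longrightarrow[\alpha\vee\{\{n\}\},\alpha\times\{\{n\}\}],\qquad \Phi(\kappa')=\alpha\vee(\kappa'\times\{\{n\}\}),
\]
with inverse $\Psi(\kappa)=\pi_{sp(\alpha)}(\kappa\setminus\alpha)$. Using the explicit formula \eqref{def:externalproductb} together with $n\notin sp(\alpha)$, one checks that $\Phi(\{\emptyset\})=\alpha\vee\{\{n\}\}$ and $\Phi(\alpha)=\alpha\times\{\{n\}\}$; monotonicity of $\vee$ and $\times$ then places $\Phi(\kappa')$ in the required interval. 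For $\Psi$, the uniform-span Proposition~\ref{prop:uniformspan} gives $sp(\kappa)=sp(\alpha)\cup\{n\}$, and Proposition~\ref{prop:intervalinvariant} gives $\pi_{sp(\alpha)}(\kappa)=\alpha$, so the sets of $\kappa$ not already dominated by $\alpha$ must contain $n$; projecting them onto $sp(\alpha)$ yields a legitimate element of $[\{\emptyset\},\alpha]$. Verifying $\Psi\circ\Phi=\mathrm{id}$ and $\Phi\circ\Psi=\mathrm{id}$ is then a careful but routine calculation on antichains, using the uniqueness in definition \eqref{def:externalproduct}.

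With the bijection established, the running total is $|AMT(1,n)| = |AMT(1,n-1)| + \sum_{\alpha \in AMT(1,n-1)\setminus\{\emptyset\}}|[\{\emptyset\},\alpha]|$. To match the target formula, I observe that for $\alpha\neq\emptyset$ we have $\emptyset<\{\emptyset\}\leq\alpha$ (since $\{\emptyset\}\leq\alpha$ iff $\alpha$ contains some superset of $\emptyset$), hence $|[\emptyset,\alpha]|=|[\{\emptyset\},\alpha]|+1$, while $|[\emptyset,\emptyset]|=1$. Substituting these relations converts the trailing sum plus $|AMT(1,n-1)|$ into $\sum_{\alpha \in AMT(1,n-1)}|[\emptyset,\alpha]|$, which is exactly the claimed recursion.
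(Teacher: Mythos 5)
Your proposal is correct and follows essentially the same route as the paper: specialize Corollary \ref{cor:intervalrecursioncounting} to $n_1=n-1$, absorb the $\beta=\{\emptyset\}$ terms by neutrality, and prove $|[\alpha\vee\{\{n\}\},\alpha\times\{\{n\}\}]|=|[\{\emptyset\},\alpha]|$ via the correspondence $\kappa'\mapsto\alpha\vee(\kappa'\times\{\{n\}\})$ with inverse $\kappa\mapsto\pi_{sp(\alpha)}(\kappa\setminus\alpha)$, which is exactly the map the paper asserts in its footnote. Your explicit bookkeeping $|[\emptyset,\alpha]|=|[\{\emptyset\},\alpha]|+1$ for $\alpha\neq\emptyset$ just spells out the paper's final (implicit) step.
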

\section{Structure of general intervals}
\label{sec:generaldecomposition}
For a finite set $N$ and $\alpha,\alpha',\beta,\beta' \in AMT(N)$, the intersection of intervals $[\alpha('),\beta(')]$ is given by
\begin{equation}
[\alpha,\beta] \cap [\alpha',\beta'] = [\alpha \vee \alpha',\beta \wedge \beta']
\end{equation}
The following theorem is an immediate consequence of Corollary \ref{cor:fulldecomposition}.
\begin{theorem}[Decomposition of intervals]
Given an interval $[\alpha,\omega]$ of antimonotone functions with $\alpha \neq \emptyset$ and an antimonotonic funtion $\sigma \le \omega$ with $sp(\sigma) = sp(\omega)$ then
the interval $[\alpha,\omega]$ is the disjoint union of intervals 
\[
{[\vee\{\kappa_S|S \in \sigma\} \vee \alpha,\times\{\kappa_S|S \in \sigma\} \wedge \omega]}
\]
where each $\kappa_S \in [\pi_{S}(\alpha),\pi_{S}(\omega)]$.
\label{the:intervaldecomposition}
\end{theorem}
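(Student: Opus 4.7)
The plan is to adapt the covering/disjointness argument of Theorem~\ref{the:generalcoordinatesystem} to the interval setting, by assigning to each $\gamma\in[\alpha,\omega]$ a canonical family $\kappa_S:=\pi_S(\gamma)$ and then verifying that the resulting intervals cover $[\alpha,\omega]$ and have pairwise empty intersection. The intersection-of-intervals formula from Proposition~\ref{prop:intersectionofintervals} already tells us that, once the right decomposition of the ambient space is in place, clipping by $[\alpha,\omega]$ produces exactly the bounds $\vee\{\kappa_S\}\vee\alpha$ and $\times\{\kappa_S\}\wedge\omega$.

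For covering, given $\gamma\in[\alpha,\omega]$ I would set $\kappa_S:=\pi_S(\gamma)$ for each $S\in\sigma$. Order conservation of projection (\ref{prop:orderconservationprojection}) applied to $\alpha\le\gamma\le\omega$ yields $\pi_S(\alpha)\le\kappa_S\le\pi_S(\omega)$, placing the family in the claimed range. Membership of $\gamma$ in $[\vee\{\kappa_S\}\vee\alpha,\times\{\kappa_S\}\wedge\omega]$ reduces to two inequalities. The lower one, $\vee\{\kappa_S\}\vee\alpha\le\gamma$, follows from $\pi_S(\gamma)\le\gamma$ (every element of a projection sits under some element of the original) together with $\alpha\le\gamma$. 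The upper one factors as $\gamma\le\omega$ (hypothesis) and $\gamma\le\times\{\kappa_S\}$, where the latter comes from the maximality characterization in~(\ref{def:externalproduct}): $\gamma$ itself satisfies $sp(\gamma)=\bigcup_S sp(\kappa_S)$ (using $sp(\gamma)\subseteq sp(\sigma)=sp(\omega)$) and $\pi_{sp(\kappa_S)}(\gamma)\le\kappa_S$ by projection composition.

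For disjointness, I would mimic the uniqueness argument in Theorem~\ref{the:generalcoordinatesystem}. Suppose $\gamma$ sits in two of the claimed intervals indexed by $(\kappa_S)$ and $(\kappa'_S)$. Projecting the lower bounds onto $S$ gives $\kappa_S\le\pi_S(\gamma)$ and $\kappa'_S\le\pi_S(\gamma)$, while the upper bounds combined with Proposition~\ref{prop:externalproductseparation} yield $\pi_{sp(\kappa_S)}(\gamma)\le\kappa_S$ and $\pi_{sp(\kappa'_S)}(\gamma)\le\kappa'_S$. Closing these inequalities against each other, using the clipping $\pi_S(\gamma)\le\pi_S(\omega)$ together with $sp(\sigma)=sp(\omega)$, should force $\kappa_S=\kappa'_S$ for every $S\in\sigma$.

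The hard part is precisely this disjointness step. In Theorem~\ref{the:generalcoordinatesystem} the condition $\kappa_S\in\Upsilon_S$ guarantees $sp(\kappa_S)=S$, so $\pi_{sp(\kappa_S)}(\gamma)\le\kappa_S$ collapses immediately to $\pi_S(\gamma)\le\kappa_S$ and closes the argument in one line. Here $\kappa_S$ only lives in $[\pi_S(\alpha),\pi_S(\omega)]$ and its span may be a proper subset of $S$, so the one-line closure no longer applies. Pushing the bookkeeping through---by exploiting the $\omega$-clipping and the condition $sp(\sigma)=sp(\omega)$ to recover the missing inequality---is where I expect the bulk of the work to lie, and is also what makes this theorem strictly stronger than a formal specialization of the general coordinate system.
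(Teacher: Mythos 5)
Your covering step is fine, but the disjointness step that you explicitly defer (``where I expect the bulk of the work to lie'') is a genuine gap, and in fact it cannot be closed under the hypotheses as you (and the theorem) state them. The inequality you hope to recover, $\pi_S(\gamma)\le\kappa_S$ for $\gamma$ in the interval indexed by $\{\kappa_S\}$, genuinely fails once the sets of $\sigma$ overlap and some $\kappa_S$ has span strictly smaller than $S$. Concretely, take $N=\{1,2,3\}$, $\omega=\sigma=\{\{1,2\},\{2,3\}\}$, $\alpha=\{\{1\},\{3\}\}$. Then $\pi_{\{1,2\}}(\alpha)=\{\{1\}\}$, $\pi_{\{1,2\}}(\omega)=\{\{1,2\}\}$, $\pi_{\{2,3\}}(\alpha)=\{\{3\}\}$, $\pi_{\{2,3\}}(\omega)=\{\{2,3\}\}$. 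The admissible family $\kappa_{\{1,2\}}=\{\{1\}\}$, $\kappa_{\{2,3\}}=\{\{2,3\}\}$ gives, via (\ref{def:externalproductb}), $\times\{\kappa_S\}=\{\{1,2,3\}\}$ and hence the interval $[\{\{1\},\{2,3\}\},\,\omega]$, which contains $\omega$; the admissible family $\kappa'_{\{1,2\}}=\{\{1,2\}\}$, $\kappa'_{\{2,3\}}=\{\{2,3\}\}$ gives the interval $[\omega,\omega]$, which also contains $\omega$. Two distinct admissible families thus index overlapping nonempty intervals, even after the $\alpha$- and $\omega$-clipping and with $sp(\sigma)=sp(\omega)$. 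So no amount of bookkeeping recovers the missing inequality: this is exactly the interaction of Remark~\ref{rem:notnonemptycoordinatesystem}-type overlaps with span-deficient coordinates, which in Theorem~\ref{the:generalcoordinatesystem} is excluded by the hypothesis $\kappa_S\in\Upsilon_S$, a hypothesis you correctly note is unavailable here.

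What makes the statement work is the additional assumption that the sets of $\sigma$ are pairwise disjoint, i.e.\ the partition setting of Theorem~\ref{the:orthogonalcoordinatesystem}; this is also the route the paper itself takes, presenting the theorem as an immediate consequence of the global decomposition of $AMT(N)$ (equation (\ref{eq:fulldecomposition}) and its corollaries) intersected with $[\alpha,\omega]$ via Property (\ref{prop:intersectionofintervals}), so that the bounds $\vee\{\kappa_S\}\vee\alpha$ and $\times\{\kappa_S\}\wedge\omega$ arise as clippings of an already disjoint family. In that setting your missing step does go through in one line: when the $S\in\sigma$ are disjoint, every element of $\times\{\kappa_S\}$ meets $S$ inside a single set of $\kappa_S$, so $\gamma\le\times\{\kappa_S\}$ forces $\pi_S(\gamma)\le\kappa_S$, and combined with your lower bound $\kappa_S\le\pi_S(\gamma)$ this pins $\kappa_S=\pi_S(\gamma)$ and yields disjointness, with nonempty clipped intervals automatically having $\kappa_S\in[\pi_S(\alpha),\pi_S(\omega)]$. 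So the honest conclusion is: your proposal is not a complete proof, and the obstruction you flagged is real -- it marks a missing hypothesis (disjointness of $\sigma$, or equivalently restriction to the orthogonal decomposition) rather than a technical detail to be pushed through.
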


\subsection{Ranks and distances}
One of the main results in this article is a recursive procedure to split an interval in ever smaller subintervals to arrive at an enumeration of its elements.
We need one more device before we can present the procedure. 
The device is a measure for the distance between two antimonotonic functions.
We will use this measure for a heuristic estimate of the size of an interval,
and to ensure that after the split, the fragments are smaller than the whole.
We first introduce the rank of an antimonotonic function. It is defined as the number of different subsets of sets in the function:
\begin{definition}
\begin{equation}
\forall \alpha \in AMT(N):
rank(\alpha) = \sum_{A \in \alpha}{2^{|A|}} 
	-  \sum_{A\neq B \in \alpha}{2^{|A \cap B|}} 
	+  \sum_{A\neq B\neq C \in \alpha}{2^{|A \cap B \cap C|}} -...
\label{def:rankfunction}
\end{equation}
\end{definition}

\noindent The distance between two antimonotonic functions is defined as 
\begin{definition}
\begin{equation}
\forall \alpha,\beta \in AMT(N): 
d(\alpha,\beta) = rank(\alpha) + rank(\beta) - 2\times rank(\alpha\wedge\beta)
\label{def:distancefunction}
\end{equation}
\end{definition}
% Patrick: Unsure, if not needed, leave it out
% This can be generalised to be valid for non comparable functions by visiting the largest common predecessor:
% \begin{definition}
% \begin{equation}
% \forall \alpha,\beta \in AMT(N):
% d(\alpha,\beta) = rank(\beta) + rank(\alpha) - 2*rank(\alpha \wedge \beta) 
% ???????????? + |\cup_{A \in \alpha}{2^A} \backslash \cup_{C \in \alpha \wedge \beta}{2^C}|
% \label{def:distancefunctionnoncomparable}
% \end{equation}
% \end{definition}
%
\subsection{Recusively partitioning intervals}
The decomposition in Theorem \ref{the:intervaldecomposition} can be repeated recusively to enumerate antimonotonic functions over a complete space or within an interval.
It is not hard to see that for $\emptyset < \alpha < \omega$, there is always a set $N \subseteq sp(\omega)$
allowing to produce smaller intervals.
\begin{prop}
For antimonotonic functions $\emptyset < \alpha < \omega$, 
there is always a nonempty set $N \subseteq sp(\omega)$ such that
$\pi_{N}(\alpha) < \pi_{N}(\omega)$. If $|\omega| > 1$ or $\alpha$ is not an immediate predecessor of $\omega$, 
there is such a set satisfying $N \subsetneq sp(\omega)$ 
\label{prop:descentproperty}
\begin{proof}
$N=sp(\omega)$ satisfies the first condition. Another example is found as follows. Let $A \in \omega$ be a non-empty set such that $A \notin \alpha$. 
It can be seen that $\pi_A(\alpha) < \pi_A(\omega)$.
(Should $P_A(\alpha) = P_A(\omega)$, then there would be a set $B \in \alpha$ 
with $A \subsetneq B$ contradicting $\alpha < \omega$.)
If $|\omega| > 1$ then $|sp(\omega)| > |A|$ and $sp(\omega) \backslash A \not= \emptyset$.
If $|\omega| = 1$ and $\alpha$ is not an immediate predecessor of $\omega$
then there must be a set $X \notin \alpha$ of the form $sp(\omega) \backslash \{a\}$ with $a$ an element of the only set in $\omega$.
We find that $P_X(\alpha) < P_X(\omega)$ and $sp(\omega) \backslash X$ is non empty. 
\end{proof}
\end{prop}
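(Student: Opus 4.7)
The plan is to exhibit an explicit witness $N$ in each of the three situations. As a preliminary, $\alpha \le \omega$ forces each set of $\alpha$ to sit inside some set of $\omega$, hence $sp(\alpha) \subseteq sp(\omega)$; moreover $\omega = \{\emptyset\}$ is ruled out, since $\emptyset < \alpha < \omega$ would then collapse to $\alpha = \{\emptyset\} = \omega$. So $sp(\omega)$ is always a nonempty set. For the first, unrestricted claim, the obvious choice $N = sp(\omega)$ works: $\pi_N$ acts as the identity on both $\omega$ and $\alpha$, so $\alpha < \omega$ transfers directly to $\pi_N(\alpha) < \pi_N(\omega)$.

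For the restricted second claim I would split on $|\omega|$. When $|\omega| > 1$, antimonotonicity of $\omega$ gives $A \subsetneq sp(\omega)$ for every $A \in \omega$, and since $\alpha < \omega$ there is some $A \in \omega$ with $A \notin \alpha$. Taking $N = A$, the crucial check is that $A \in \pi_A(\omega)$ but $A \notin \pi_A(\alpha)$: any $B \in \alpha$ containing $A$ would lie under some $B' \in \omega$, and the antichain property of $\omega$ would force $A = B' = B$, contradicting $A \notin \alpha$. Combined with the order-conservation property of projection, this gives the required strict inequality.

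When $|\omega| = 1$, write $\omega = \{S\}$, so $sp(\omega) = S$. The hypothesis that $\alpha$ is not an immediate predecessor of $\omega$, read through the contrapositive of Lemma \ref{lemma:antimonotoneimmediatesuccessors} (noting $\omega \setminus \alpha = \{S\}$ automatically since $\alpha < \omega$), produces some $X \subsetneq S$ admitting no superset inside $\alpha$. Pick $a \in S \setminus X$ and set $N = S \setminus \{a\}$, so that $X \subseteq N \subsetneq S$; then $N \notin \alpha$ because $N$ is itself a superset of $X$. I would then verify $\pi_N(\omega) = \{N\}$ and that $N \notin \pi_N(\alpha)$: any witness $B \in \alpha$ with $N \subseteq B$ must satisfy $B \subseteq S$ (as $sp(\alpha) \subseteq S$), leaving only $B = N$ or $B = S$, both excluded.

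The main obstacle is the subcase $|\omega|=1$: it requires cleanly inverting the immediate-successor criterion to extract a missing proper subset $X$ of $S$, and then climbing from $X$ to a coatom $S \setminus \{a\}$ of $S$ in such a way that (i) $N \subsetneq sp(\omega)$ is preserved and (ii) the strict drop $\pi_N(\alpha) < \pi_N(\omega)$ survives the projection. The other cases are essentially direct once order conservation of $\pi$ is in hand.
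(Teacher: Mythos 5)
Your proof is correct and takes essentially the same route as the paper: $N = sp(\omega)$ for the unrestricted claim, $N = A$ for some nonempty $A \in \omega \setminus \alpha$ when $|\omega| > 1$, and a coatom $N = S \setminus \{a\}$ of $S = sp(\omega)$ when $|\omega| = 1$ and $\alpha$ is not an immediate predecessor. Your handling of the $|\omega| = 1$ case is in fact a bit more explicit than the paper's, which simply asserts the existence of a coatom of that form not dominated by $\alpha$, whereas you derive it from the negation of the immediate-successor criterion of Lemma \ref{lemma:antimonotoneimmediatesuccessors}.
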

\noindent Given that $\alpha <_{im} \omega \Leftrightarrow |[\alpha,\omega]| = 2$, 
and building on Property \ref{prop:descentproperty}, 
Procedure \ref{proc:intervallisting} lists the elements of an interval $[\alpha,\beta]$.
%:
\begin{algorithm}
\caption{List all elements of an interval of antimonotonic functions and return their number}
\begin{algorithmic}
\label{proc:intervallisting}
\REQUIRE $\alpha \not= \emptyset, \beta \not= \emptyset$
\ENSURE $|[\alpha,\beta]|$
\ENSURE print all elements of $[\alpha,\beta]$
\STATE {\bf function} listElements($\alpha,\beta \in AMT(n)$) {\bf returns} number 
\IF{$\alpha \not\le \beta $}
	\RETURN 0
\ENDIF
\IF {$\alpha = \beta$ } 
	\PRINT $\alpha$
	\RETURN 1
\ENDIF
\IF {$|b| = 1\ and\ \alpha <_{im} b$}
 	\PRINT $\alpha, \beta$
	\RETURN 2
\ENDIF
\STATE select disjoint subsets X,Y of $sp(\beta)$ such that
\STATE $d(P_X(\alpha),P_X( \beta))\ge 1\ and\ d(P_Y( \alpha),P_Y( \beta)) \ge 1$ 
%\STATE $d(P_X(\alpha),P_X( \beta)) + d(P_Y( \alpha),P_Y( \beta))$ is maximal
\STATE {\bf set} COUNT = 0
\FORALL{$\kappa \in [P_X(\alpha),P_X(\beta)]$}
	\FORALL{$\lambda \in [P_Y(\alpha),P_Y(\beta)]$}
			\STATE {\bf set} COUNT = COUNT + listElements($(\kappa \vee \lambda) \vee \alpha, (\kappa \times \lambda) \wedge \beta $)
	\ENDFOR
\ENDFOR
\RETURN COUNT
\STATE {\bf end function}
\end{algorithmic}
\end{algorithm}
Note that the recursion is not only over the fragment $[(\kappa \vee \lambda) \vee \alpha,(\kappa \times \lambda)\wedge \beta]$ but also in the iterations over $[P_X(\alpha),P_X(\beta)]$ and $[P_Y(\alpha),P_Y(\beta)]$. 
Since each couple $\kappa, \lambda$ selected from these intervals are used in the construction of the elements of the new interval, and since finding a suitable split is linear in the size of $span(\omega)$, this procedure is of complexity $|span(\omega)|*outputsize$. Given the double exponential size of the output with respect to $span(\omega)$, we can say that the procedure is essentially linear in the size of the output.

\noindent The conditions $d(P_X(\alpha),P_X( \beta)) \ge 1$ and $d(P_Y( \alpha),P_Y( \beta)) \ge 1$ can be replaced by a condition that splits the interval in parts that are as equal as possible. This reduces the depth of the recursion and has a beneficial effect on the construction of the intervals (bigger $\kappa, \lambda$). It does bring a cost however in the computation of the split of $span(\omega)$. It is not immediately clear what the impact on the complexity is.
%
%
%
%
% !TEX root = Paper.tex
\section{Conclusions}
\label{sec:conclusions}
We developed an interval algebra for the lattice of anti-monotonic functions on subsets of a finite set.  Formulae for the intersection of intervals are given. An essential operator in the algebra is the external product operator of section \ref{sec:operators}.Together with the join operator, it allows for the decomposition as the union of disjoint intervals of the lattice of anti-monotonic functions on the subsets of a given finite set. This decomposition may be based on the elements of the sub-lattices defined on the sets accepted by any anti-monotonic function that covers the original set. A particularly interesting case of this decomposition relates to an anti-monotonic function in which all sets are disjoint. The external product operator then takes a particularly simple form. Given the rules for the intersection of intervals, the decomposition of the whole lattice naturally leads to a decomposition any interval. The partitions of the lattice as well as of its intervals resulting from this decomposition lead to a number of recursion formulae which can be used to count the number of elements in the lattice or interval (Dedekind problem). As an application of the algebra, we present an algorithm that enumerates all elements of the lattice in output-polynomial time.  An implementation of this algorithm is available.
 In appendix \ref{app:generallattices} we define the operator and prove the partitioning theorem for complete distributive lattices. Appendix \ref{app:example} describes the example of Young's lattice.
% !TEX root = Paper.tex
\section{Appendix}
\label{app:generallattices}
Let L be a complete distributive lattice with unit element 1.
Let  $\alpha, \beta, \gamma \in L$ and define
\begin{itemize}
\item $Next(\alpha) = \{\kappa|\alpha <_{im} \kappa\}$
\item $base(\alpha) = \alpha \wedge(\vee(Next(1)))$
\item $top(\alpha) = \vee \{\kappa|base(\kappa) = base(\alpha)\}$
\item $\alpha \times \beta = \vee \{\kappa| top(\alpha) \wedge \kappa \le \alpha\ and\  \ top(\beta)\wedge\kappa \le \beta\}$
\end{itemize}
It is straightforward to prove the following
\begin{lemma}
\label{lem:externalproductandorder}
\[
\gamma \le \alpha \times \beta \Rightarrow \gamma \wedge top(\alpha) \le \alpha
\]
\begin{proof}
\begin{eqnarray*}
\gamma & \le & \vee \{\kappa| top(\alpha) \wedge \kappa \le \alpha \ and\  top(\beta)\wedge\kappa \le \beta\}\\
&\Leftrightarrow&\\
\gamma & = & \gamma \wedge (\vee \{\kappa| top(\alpha) \wedge \kappa \le \alpha\ and\  top(\beta)\wedge\kappa \le \beta\})\\
& = & \vee (\{\gamma \wedge \kappa| top(\alpha) \wedge \kappa \le \alpha\ and\  top(\beta)\wedge\kappa \le \beta\})\\
&\Rightarrow&\\
\gamma \wedge top(\alpha)& = & \vee (\{\gamma \wedge top(\alpha) \wedge \kappa| top(\alpha) \wedge \kappa \le \alpha\ and\  top(\beta)\wedge\kappa \le \beta\})\\
& \le & \alpha
\end{eqnarray*}
$\Box$
\end{proof}
\end{lemma}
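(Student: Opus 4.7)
The plan is to exploit distributivity of meet over arbitrary joins to slide $top(\alpha)$ inside the defining join of $\alpha\times\beta$, and then to bound every summand using the very selection condition $top(\alpha)\wedge\kappa \le \alpha$ that appears in that definition.

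First I would rewrite the hypothesis $\gamma \le \alpha\times\beta$ as the equivalent equality $\gamma = \gamma\wedge(\alpha\times\beta)$ and substitute the definition of the external product:
\[
\gamma \;=\; \gamma \wedge \bigvee\{\kappa \mid top(\alpha)\wedge\kappa \le \alpha \text{ and } top(\beta)\wedge\kappa \le \beta\}.
\]
Invoking distributivity of meet over the (arbitrary) join pushes $\gamma$ inside, giving
\[
\gamma \;=\; \bigvee\{\gamma \wedge \kappa \mid top(\alpha)\wedge\kappa \le \alpha \text{ and } top(\beta)\wedge\kappa \le \beta\}.
\]

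Second, I would meet both sides with $top(\alpha)$ and distribute once more, obtaining
\[
\gamma\wedge top(\alpha) \;=\; \bigvee\{\gamma\wedge top(\alpha)\wedge\kappa \mid top(\alpha)\wedge\kappa \le \alpha \text{ and } top(\beta)\wedge\kappa \le \beta\}.
\]
Each summand on the right satisfies $\gamma\wedge top(\alpha)\wedge\kappa \le top(\alpha)\wedge\kappa \le \alpha$ by the very condition used to select the admissible $\kappa$'s. Since the supremum of a family of elements all bounded by $\alpha$ is itself bounded by $\alpha$, the conclusion $\gamma\wedge top(\alpha) \le \alpha$ follows immediately.

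The only delicate step is the appeal to distributivity: the selection set over which $\alpha\times\beta$ is formed is in general large, so ordinary finite distributivity does not suffice. One really needs meets to distribute over arbitrary joins (the frame axiom). The paper's assumption of a ``complete distributive lattice'' must therefore be read in this stronger sense; once that reading is adopted, the whole argument is essentially a two-line calculation with no further lattice-theoretic input, and the reverse inequality $\gamma \wedge top(\alpha) \le \gamma$ is automatic so no converse argument is required.
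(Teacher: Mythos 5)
Your proposal is correct and follows essentially the same route as the paper's own proof: rewrite $\gamma \le \alpha \times \beta$ as $\gamma = \gamma \wedge (\alpha \times \beta)$, distribute the meet over the defining join, meet with $top(\alpha)$, and bound each term $\gamma \wedge top(\alpha) \wedge \kappa \le \alpha$ via the selection condition. Your added observation that the argument needs meet to distribute over \emph{arbitrary} joins (so ``complete distributive lattice'' must be read in that stronger, frame-like sense) is a fair reading of what the paper uses implicitly, but it does not change the argument.
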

\noindent Lemma \ref{lem:externalproductandorder} allows proving the following
\begin{theorem}
Let $\alpha, \beta \in L, \kappa_\alpha,\kappa'_\alpha \in [base(\alpha),top(\alpha)], \kappa_\beta,\kappa'_\beta \in [base(\beta),top(\beta)]$. We have
\begin{equation*}
[\kappa_\alpha \vee \kappa_\beta,\kappa_\alpha \times \kappa_\beta] \cap [\kappa'_\alpha \vee \kappa'_\beta,\kappa'_\alpha \times \kappa'_\beta]  \neq \emptyset
\Leftrightarrow
\kappa_\alpha = \kappa'_\alpha,\kappa_\beta = \kappa'_\beta.
\end{equation*}
\begin{proof}
Let $\gamma \in [\kappa_\alpha \vee \kappa_\beta,\kappa_\alpha \times \kappa_\beta] \cap [\kappa'_\alpha \vee \kappa'_\beta,\kappa'_\alpha \times \kappa'_\beta] $. We find e.g.
\begin{eqnarray*}
\kappa_\alpha \le & \gamma \wedge top(\alpha) & \le \kappa'_\alpha\\
&and&\\
\kappa'_\alpha \le & \gamma \wedge top(\alpha) & \le \kappa_\alpha
\end{eqnarray*}
$\Box$
\end{proof}
\end{theorem}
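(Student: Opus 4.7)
The statement mirrors the disjointness half of Theorem~\ref{the:generalcoordinatesystem}, only now phrased at the abstract lattice level. The natural strategy is therefore a line-by-line lift of that earlier argument, with Lemma~\ref{lem:externalproductandorder} playing the role that inequality~(\ref{prop:externalproductseparation}) played in the concrete case. The substantive direction is ``$\Rightarrow$'': a nonempty intersection forces the coordinates to agree; the converse is essentially tautological since equal coordinates produce the same pair of intervals.

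\textbf{Preliminary reduction and forward direction.} Before applying the lemma, I would verify the preliminary fact that $top(\kappa_\alpha)=top(\alpha)$ for every $\kappa_\alpha\in[base(\alpha),top(\alpha)]$, and similarly for the $\beta$-side. This reduces to showing that $base$ is constant on such intervals: using the (infinite) distributivity of $L$, one obtains $\kappa_\alpha\wedge(\vee Next(1))\le top(\alpha)\wedge(\vee Next(1))=\vee\{base(\kappa):base(\kappa)=base(\alpha)\}=base(\alpha)$, while $\kappa_\alpha\ge base(\alpha)$ gives the reverse inequality; then $top(\kappa_\alpha)=top(\alpha)$ is immediate from the definition of $top$. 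With this in hand, take any $\gamma$ in the intersection. From $\gamma\le\kappa_\alpha\times\kappa_\beta$, Lemma~\ref{lem:externalproductandorder} (applied with $\kappa_\alpha,\kappa_\beta$ in the roles of $\alpha,\beta$) yields $\gamma\wedge top(\alpha)\le\kappa_\alpha$. Conversely, $\kappa_\alpha\le\kappa_\alpha\vee\kappa_\beta\le\gamma$ together with $\kappa_\alpha\le top(\alpha)$ gives $\kappa_\alpha\le\gamma\wedge top(\alpha)$, so $\kappa_\alpha=\gamma\wedge top(\alpha)$. The identical chain applied to the primed interval produces $\kappa'_\alpha=\gamma\wedge top(\alpha)$, hence $\kappa_\alpha=\kappa'_\alpha$. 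Swapping the roles of $\alpha$ and $\beta$ gives $\kappa_\beta=\kappa'_\beta$.

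\textbf{Converse direction and main obstacle.} If the two sets of coordinates agree, the two intervals are literally the same, so their intersection equals that single interval; this carries content only when the interval is inhabited, which is the only case in which the equivalence as stated is meaningful. The one genuinely non-routine step is the preliminary reduction $top(\kappa_\alpha)=top(\alpha)$: it requires enough distributivity in $L$ for meet to commute with the infinitary join appearing in the definition of $top$. If one only has finite distributivity, this step has to be established by a separate auxiliary lemma (or the definitions of $base$ and $top$ refined); once it is in place, the remainder of the argument is a direct transcription of the disjointness half of Theorem~\ref{the:generalcoordinatesystem}.
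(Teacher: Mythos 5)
Your proposal is correct and follows essentially the same route as the paper: take $\gamma$ in the intersection, use Lemma~\ref{lem:externalproductandorder} together with $\kappa_\alpha\vee\kappa_\beta\le\gamma$ to squeeze $\gamma\wedge top(\alpha)$ between $\kappa_\alpha$ and $\kappa'_\alpha$ in both directions, and symmetrically for $\beta$. The only difference is that you make explicit the step $top(\kappa_\alpha)=top(\alpha)$ for $\kappa_\alpha\in[base(\alpha),top(\alpha)]$ (needed to apply the lemma with $\kappa_\alpha,\kappa_\beta$ in place of $\alpha,\beta$), which the paper's terse proof leaves implicit; your justification of it via complete distributivity is sound.
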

\noindent Since for $\gamma \in [base(\alpha)\vee base(\beta), top(\alpha)\times top(\beta)], 
\gamma \in [(\gamma \wedge top(\alpha)) \vee (\gamma \wedge top(\beta)), (\gamma \wedge top(\alpha)) \times (\gamma \wedge top(\beta))]$, we have constructed a partition of the interval 
 $[base(\alpha)\vee base(\beta), top(\alpha)\times top(\beta)]$.

% !TEX root = Paper.tex
\section{Appendix: an example}
\label{app:example}
Young's lattice is the lattice of Young diagrams. It is infinite. To apply our theory, let us initially use bounds for the horizontal $(n_h)$ and the vertical $(n_v)$ dimensions of the Young diagrams.
We denote by $vs_i$ vertical strip of size $i$ and by $hs_i$ the horizontal strip of size $i$.
The unit Young diagram is then $u = vs_1 = hs_1$.
The immediate successors of $u$ are $vs_2$ and $hs_2$.
We find that $vs_i \vee hs_j$ is an L-shaped Young diagram with vertical dimension $i$ and horizontal dimension $j$.
The external product $vs_i \times hs_j$ is a rectangular Young diagram with the same dimensions.
According to the decomposition theorem in appendix \ref{app:generallattices}, the set of all nonempty Young diagrams with the given bounds is given by the disjoint union
\begin{equation*}
\bigcup_{0<i<n_v,0<j<n_h}[vs_i \vee hs_j,vs_i \times hs_j].
\end{equation*}
Since this partition hods for all values of $n_v$ and $n_h$, the infinite lattice of nonempty Young diagrams is given by the disjoint union
\begin{equation*}
\bigcup_{0<i,0<j}[vs_i \vee hs_j,vs_i \times hs_j].
\end{equation*}

\end{document}